\numberwithin{equation}{section}
\numberwithin{figure}{section}
\theoremstyle{plain}
\newtheorem{Theorem}{Theorem}[section]
\newtheorem*{Theorem*}{Theorem}
\newtheorem{Lemma}[Theorem]{Lemma}
\theoremstyle{definition}
\newtheorem{Definition}[Theorem]{Definition}
\newtheorem{Remark}[Theorem]{Remark}
\newcommand{\C}{\mathbb{C}}
\newcommand{\Z}{\mathbb{Z}}
\newcommand{\kk}{\Bbbk}
\newcommand{\Hom}{\mathrm{Hom}}
\newcommand{\End}{\mathrm{End}}
\newcommand{\Ext}{\mathrm{Ext}}
\newcommand{\Cdot}{\boldsymbol{\cdot}}
\newcommand{\Stab}{\mathrm{Stab}}
\newcommand{\GL}{\mathrm{GL}}
\newcommand{\sign}{\mathrm{sign}}
\title{Cohomology in singular blocks of parabolic category $\mathcal{O}$}
\author{Jonathan Gruber}
\address{Department of Mathematics, National University of Singapore, Singapore}
\email{jgruber@nus.edu.sg}
\subjclass{17B55 (primary), 17B10, 17B67 (secondary)}
\keywords{cohomology, Lie algebra, Kac-Moody algebra, Koszul duality}
\date{\today}
\begin{document}

\begin{abstract}
	We determine the dimensions of $\Ext$-groups between simple modules and dual generalized Verma modules in singular blocks of parabolic versions of category $\mathcal{O}$ for complex semisimple Lie algebras and affine Kac-Moody algebras.
\end{abstract}

\maketitle

\section*{Introduction}
\renewcommand{\theequation}{\Alph{equation}}

Let $\mathfrak{g}$ be a complex semisimple Lie algebra with Borel subalgebra $\mathfrak{b}$ and Cartan subalgebra $\mathfrak{h} \subseteq \mathfrak{b}$, and let $W \subseteq \GL(\mathfrak{h}^*)$ be the Weyl group of $\mathfrak{g}$.
For $\lambda \in \mathfrak{h}^*$, let us write $\nabla_\lambda$ for the dual Verma module of highest weight $\lambda$ and $L_\lambda$ for its unique simple submodule. 
The BGG category $\mathcal{O}$ decomposes into blocks according to the dot action of $W$ on $\mathfrak{h}^*$ given by $w \Cdot \lambda = w(\lambda+\rho) - \rho$ for $w \in W$ and $\lambda \in \mathfrak{h}^*$, where $\rho$ denotes the half-sum of all positive roots of $\mathfrak{g}$ with respect to $\mathfrak{b}$.
Let us write $\mathcal{O}_\lambda$ for the block corresponding to an integral and strictly anti-dominant weight $\lambda \in \mathfrak{h}^*$.
If $\lambda$ is regular (i.e.\ if the stabilizer of $\lambda$ with respect to the dot action of $W$ is trivial) then the composition multiplicities of simple modules in Verma modules in $\mathcal{O}_\lambda$ can be computed as the values at $1$ of certain Kazhdan-Lusztig polynomials.
This result is known as the Kazhdan-Lusztig character formula; it was conjectured by D.\ Kazhdan and G.\ Lusztig in \cite{KazhdanLusztig} and proven by J.-L.\ Brylinski and M.\ Kashiwara in \cite{BrylinskiKashiwara} and by A.\ Be\u{\i}linson and J.\ Bernstein in \cite{BeilinsonBernstein}.
Even before the validity of the Kazhdan-Lusztig character formula had been established, it had been shown by D.\ Vogan in \cite{VoganExt} that its validity is equivalent to the formula
\begin{equation} \label{eq:VoganExt}
	\sum_{i \geq 0} \dim \Ext_{\mathcal{O}}^i\big( L_{x\Cdot\lambda} , \nabla_{y\Cdot\lambda} \big) \cdot v^i = h_{y,x}
\end{equation}
for all $x,y \in W$, where $h_{y,x}$ denotes a Kazhdan-Lusztig polynomial (in the normalization of \cite{SoergelKL}).
In other words, the dimensions of $\Ext$-groups between simple modules and dual Verma modules in $\mathcal{O}_\lambda$ are given by coefficients of Kazhdan-Lusztig polynomials.
The formula \eqref{eq:VoganExt} has been generalized by W.\ Soergel \cite{Soergelncohomology} in two directions:
Firstly, one can replace the regular weight $\lambda$ by a singular weight (i.e.\ by a weight whose stabilizer with respect to the dot action of $W$ is non-trivial), and secondly, one can replace the category $\mathcal{O}_\lambda$ by its parabolic version $\mathcal{O}_\lambda^{\mathfrak{p}}$ (and replace $\nabla_\lambda$ by a dual generalized Verma module $\nabla^\mathfrak{p}_\lambda$) for a parabolic subalgebra $\mathfrak{p}$ of $\mathfrak{g}$ containing $\mathfrak{b}$.
In both cases, the dimensions of $\Ext$-groups between simple $\mathfrak{g}$-modules and dual (generalized) Verma modules are the coefficients of certain parabolic Kazhdan-Lusztig polynomials.
In this note, we further generalize the formula \eqref{eq:VoganExt} by considering blocks corresponding to singular weights in parabolic versions of $\mathcal{O}$.
Our first main result is as follows; see Theorem \ref{thm:extcategoryO}.
The notation is explained in Appendix \ref{sec:appendix}.

\begin{Theorem*}
	Let $\mu \in \mathfrak{h}^*$ be an integral strictly anti-dominant weight with stabilizer $\Stab_W(\mu) = W_I = \langle I \rangle$ for some subset $I$ of the set of simple reflections in $W$, and let $\mathfrak{p} = \mathfrak{p}_J$ be a parabolic subalgebra of $\mathfrak{g}$ containing $\mathfrak{b}$ such that the Levi factor of $\mathfrak{p}$ has Weyl group $W_J = \langle J \rangle$ for a subset $J$ of the set of simple reflections in $W$.
	Then we have
	\[ \sum_{i \geq 0} \dim \Ext_{\mathcal{O}^\mathfrak{p}}^i\big( L_{x\Cdot\lambda} , \nabla^\mathfrak{p}_{y\Cdot\lambda} \big) \cdot v^i = n^I_{y^{-1},x^{-1}} \]
	for all $x,y \in w_J \prescript{J}{}{}{W}^I_\mathrm{reg}$, where $w_J$ denotes the longest element of $W_J$, $\prescript{J}{}{}{W}^I_\mathrm{reg}$ denotes the set of minimal length representatives for the regular double cosets in $W_J \backslash W / W_I$, and $n^I_{y^{-1},x^{-1}}$ denotes an anti-spherical Kazhdan-Lusztig polynomial.
\end{Theorem*}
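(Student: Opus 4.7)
The plan is to reduce the identity to a graded composition multiplicity via parabolic-singular Koszul duality; this is the natural framework because the polynomial $n^I$ appearing on the right-hand side is indexed by the stabilizer $W_I$ rather than by the parabolic subgroup $W_J$, so the strategy must involve a duality that swaps these two roles. To begin, I would lift $\mathcal{O}^{\mathfrak{p}_J}_\mu$ to its graded version (whose existence in this parabolic-singular generality goes back to Beilinson-Ginzburg-Soergel and Backelin). The standard Koszul-type argument in a graded highest weight category then identifies
\[ \sum_{i \geq 0}\dim \Ext_{\mathcal{O}^{\mathfrak{p}}}^i\bigl(L_{x\Cdot\mu},\nabla^{\mathfrak{p}_J}_{y\Cdot\mu}\bigr)\cdot v^i \;=\; \bigl[\nabla^{\mathfrak{p}_J}_{y\Cdot\mu} : L_{x\Cdot\mu}\bigr]_v, \]
where the right-hand side is the graded composition multiplicity, so the theorem reduces to a statement about such multiplicities.

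Next, I would apply parabolic-singular Koszul duality, which exchanges the roles of the parabolic subalgebra and the singular stabilizer: $\mathcal{O}^{\mathfrak{p}_J}_\mu$ (with stabilizer $W_I$) is Koszul dual to a block $\mathcal{O}^{\mathfrak{p}_I}_\nu$ in which the parabolic is now $\mathfrak{p}_I$ and the stabilizer of $\nu$ is $W_J$. Under this duality the graded simple module indexed by $x$ corresponds, up to a cohomological/grading shift, to the graded dual generalized Verma indexed by $x^{-1}$, and vice versa — this is precisely the source of the inversions in the final formula. One thus obtains
\[ \bigl[\nabla^{\mathfrak{p}_J}_{y\Cdot\mu} : L_{x\Cdot\mu}\bigr]_v \;=\; \bigl[\nabla^{\mathfrak{p}_I}_{x^{-1}\Cdot\nu} : L_{y^{-1}\Cdot\nu}\bigr]_v. \]

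Finally, I would identify this multiplicity on the Koszul-dual side with $n^I_{y^{-1},x^{-1}}$. For a parabolic-singular block with parabolic $\mathfrak{p}_I$ and singular stabilizer $W_J$, graded composition multiplicities of simple modules in dual generalized Verma modules equal the coefficients of the anti-spherical Kazhdan-Lusztig polynomial associated to $W_I$, by the parabolic-singular generalization of the Kazhdan-Lusztig character formula (Deodhar, Soergel, and Kashiwara-Tanisaki in the affine setting). The hypothesis $x,y \in w_J\prescript{J}{}{W}^I_{\mathrm{reg}}$ is exactly what parametrizes the non-zero simples in $\mathcal{O}^{\mathfrak{p}_J}_\mu$: the leading $w_J$ handles $\mathfrak{p}_J$-dominance and the "regular" condition ensures trivial stabilizer under the shifted $W_I$-action.

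The main obstacle will be the bookkeeping in the Koszul-duality step: one must verify, with the specific conventions of the appendix, that the bijection on simple modules really is $w\mapsto w^{-1}$, correctly absorb the internal grading shifts so that the cohomological variable $v$ on the left matches the internal grading variable $v$ on the right, and ensure that Koszul duality is available in the full generality considered (including the affine Kac-Moody blocks, where the relevant graded lift and duality are more subtle). Once these combinatorial and grading matters are aligned, the first and third steps are essentially standard applications of graded highest weight theory and the existing multiplicity formulas.
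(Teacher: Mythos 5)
Your overall strategy — pass to a Koszul graded lift, invoke parabolic--singular Koszul duality to swap the roles of the stabiliser $W_I$ and the parabolic $W_J$, then read off a known character formula on the dual side — is indeed the architecture of the paper's proof. But there are two concrete problems with the argument as you wrote it, one of which is a genuine gap.

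First, your opening displayed identity is not correct. The Koszul machinery does not give
\[ \sum_{i \geq 0}\dim \Ext_{\mathcal{O}^{\mathfrak{p}}}^i\bigl(L_{x\Cdot\mu},\nabla^{\mathfrak{p}_J}_{y\Cdot\mu}\bigr)\cdot v^i = \bigl[\nabla^{\mathfrak{p}_J}_{y\Cdot\mu} : L_{x\Cdot\mu}\bigr]_v \]
within the \emph{same} block. What Koszul duality (Lemma \ref{lem:KoszuldualKLpolynomials} in the paper, i.e.\ $p_{\nu,\lambda}=q^!_{\nu,\lambda}$) gives is that the Ext polynomial in $\mathcal{O}^{\mathfrak{p}_J}_\mu$ equals the graded composition multiplicity polynomial in the \emph{Koszul dual} block $\mathcal{O}^{\mathfrak{p}_I}_\nu$. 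Your step one and step two, when combined correctly, collapse into a single application of this duality; the intermediate identity as stated is false.

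Second, and more importantly, even after the correction above you have reduced to a graded composition multiplicity in the Koszul-dual block $\mathcal{O}^{\mathfrak{p}_I}_\nu$, which is \emph{still} both parabolic (in $\mathfrak{p}_I$) and singular (with stabiliser $W_J$). You then appeal to ``the parabolic--singular generalization of the Kazhdan--Lusztig character formula (Deodhar, Soergel, Kashiwara--Tanisaki)'' to identify this multiplicity with an anti-spherical KL polynomial. But those references furnish only the \emph{ungraded} multiplicities, i.e.\ values at $v=1$. What you need is the \emph{graded} refinement, and establishing exactly this refinement is the substance of the paper's proof. The paper bridges this gap with a graded translation functor $\hat T_{\lambda^\prime}^{\lambda}$ lifting the usual translation from a regular block to the singular block within the same parabolic category: since the translation functor is exact, sends pure objects to pure objects of the same degree, sends simples to simples (or zero), and sends generalized Verma modules to generalized Verma modules, it identifies the graded multiplicities in $\mathcal{O}^{\mathfrak{p}_J}_{\lambda^\prime,\mathrm{gr}}$ (regular, where they are known by BGS, Theorem 3.11.4) with those in $\mathcal{O}^{\mathfrak{p}_J}_{\lambda,\mathrm{gr}}$ (singular). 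Without this step your argument is circular: the graded singular--parabolic character formula you invoke is the very thing being proved. You should insert this translation-functor comparison before invoking any character formula, and the remaining bookkeeping you flag (the $w\mapsto w^{-1}w_0$ matching of idempotents, grading shifts) then proceeds as you anticipate.
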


In Theorem \ref{thm:extKacMoody}, we establish an analogous result for singular blocks of parabolic category $\mathcal{O}$ for an affine Kac-Moody algebra.
The strategy for proving these two theorems is the same:
We first observe that the blocks of parabolic category $\mathcal{O}$ (or certain truncations thereof) admit standard Koszul gradings by the results of \cite{BackelinKoszulduality} and \cite{ShanVaragnoloVasserot}, so that the polynomials that record the dimensions of $\Ext$-groups (as in the theorem above) can be related to certain graded composition multiplicities.
Then we use graded analogues of translation functors in order to deduce our formulas from the known results about dimensions of $\Ext$-groups in regular blocks of parabolic category $\mathcal{O}$.
For affine Kac-Moody algebras, our proof also uses a `double parabolic inversion formula' for parabolic Kazhdan-Lusztig polynomials with respect to two different parabolic subgroups of a given Coxeter group, which we prove in Appendix \ref{sec:appendix}.

\subsection*{Acknowledgments}

The author would like to thank Donna Testerman and Wolfgang Soergel for helpful discussions and comments.
This work was funded by the Swiss National Science Foundation under the grant FNS 200020\_207730 and by the Singapore MOE grant R-146-000-294-133.

\section{Gradings and Koszul duality} \label{sec:Koszul}
\renewcommand{\theequation}{\thesection.\arabic{equation}}

We first recall some well-known results about standard Koszul graded algebras and Koszul duality.

Let $\hat A = \bigoplus_{i \in \Z} A_i$ be a finite-dimensional graded algebra over a field $\kk$, with underlying (ungraded) algebra $A$, and let $\mathcal{C}=A\text{-}\mathrm{mod}$ and $\mathcal{C}_\mathrm{gr} = \hat A\text{-}\mathrm{grmod}$ be the categories of finite-dimensional left $A$-modules and of finite-dimensional graded left $\hat A$-modules, respectively.
In the following, we simply refer to the objects of $\mathcal{C}$ as $A$-modules and to the objects of $\mathcal{C}_\mathrm{gr}$ as graded $\hat A$-modules.
A graded $\hat A$-module $M = \bigoplus_i M_i$ is called \emph{pure} of degree $d \in \Z$ if $M=M_d$.
For $m \in \Z$, let us further denote by $\langle m \rangle \colon \mathcal{C}_\mathrm{gr} \to \mathcal{C}_\mathrm{gr}$ the $m$-th grading shift functor, which sends a graded $\hat A$-module $M = \bigoplus_i M_i$ to the graded $\hat A$-module $M\langle m \rangle = M$, with the grading defined by $M\langle m \rangle_i = M_{i-m}$, and let $\mathrm{f} \colon \mathcal{C}_\mathrm{gr} \to \mathcal{C}$ be the functor that sends a graded $\hat A$-module to the underlying (ungraded) $A$-module.
We call a graded $\hat A$-module $\tilde M$ a \emph{graded lift} of an $A$-module $M$ if $M \cong \mathrm{f}(\tilde M)$.
Now suppose additionally that $\hat A$ is positively graded, i.e.\ that $A_i = 0$ for all $i<0$, and that $A_0$ is a semisimple $\kk$-algebra.
Observe that $A_{>i} = \bigoplus_{j>i} A_j$ is an ideal in $A$ for any $i \geq 0$ and that we can consider $A_0$ as an $A$-module via the identification $A_0 \cong A / A_{>0}$.
\begin{Definition}
	We say that $\hat A$ is \emph{Koszul} if for all $i \in \Z_{\geq 0}$ and $j \in \Z$ with $i \neq j$, we have
	\[ \Ext_{\mathcal{C}_\mathrm{gr}}^i\big( A_0 , A_0 \langle j \rangle \big) = 0 . \]
	In that case, we also say that \emph{$A$ admits a Koszul grading}.
\end{Definition}

Consider the $\Ext$-algebra
\[ E(\hat A) \coloneqq \bigoplus_{i \in \Z} \Ext_\mathcal{C}^i(A_0,A_0) , \]
with the natural grading defined by $E(\hat A)_i = \Ext_\mathcal{C}^i(A_0,A_0)$ for $i \in \Z$.
If $\hat A$ is Koszul then so is $E(\hat A)$, and there is a canonical isomorphism of graded algebras $\hat A \cong E(E(\hat A))$ by Proposition 2.9.1 and Theorems 2.10.1 and 2.10.2 in \cite{BeilinsonGinzburgSoergel}.
When $\hat{A}$ is Koszul, we call the graded algebra $\hat A^! = E(\hat A)^\mathrm{op}$ the \emph{Koszul dual} of $\hat{A}$; it is Koszul because the opposite algebra of a Koszul graded algebra is Koszul by Proposition 2.2.1 in \cite{BeilinsonGinzburgSoergel}.
We also write $A^!$ for the underlying (ungraded) algebra of $\hat A^!$.%
\footnote{According to Corollary 2.5.2 in \cite{BeilinsonGinzburgSoergel}, any two Koszul gradings on $A$ give rise to isomorphic graded algebras.
Therefore, the algebra $A^!$ does not depend on the choice of Koszul grading of $A$.}
Now additionally suppose that $A$ has finite global dimension, so that $E(A)$ is finite-dimensional.
Then according to Theorems 2.12.5 and 2.12.6 in \cite{BeilinsonGinzburgSoergel}, there is an equivalence of triangulated categories
\[ K \colon D^b( \mathcal{C}_\mathrm{gr} ) \longrightarrow D^b( \hat A^!\text{-}\mathrm{grmod} ) \]
such that $K( M \langle i \rangle ) \cong K(M) \langle -i \rangle [ -i ]$ for every graded $\hat A$-module $M$.
\medskip

Now suppose that $\mathcal{C}$ is a highest weight category (see Definition 3.1 in \cite{CPSHighestWeight}) with finite weight poset $(\Lambda,\leq)$ and with simple objects $L_\lambda$, standard objects $\Delta_\lambda$ and costandard objects $\nabla_\lambda$, for $\lambda \in \Lambda$.
Let us further write $I_\lambda$ and $P_\lambda$ for the injective hull and the projective cover of $L_\lambda$, let $P = \bigoplus_{\lambda \in \Lambda} P_\lambda$ be a projective generator of $\mathcal{C}$ and set $A \coloneqq \End_\mathcal{C}(P)^\mathrm{op}$, so that $\mathcal{C}$ is equivalent to $A\text{-}\mathrm{mod}$ via the functor $\Hom_\mathcal{C}(P,-)$.
In the following, we suppress the functor $\Hom_\mathcal{C}(P,-)$ from the notation and simply treat objects of $\mathcal{C}$ as $A$-modules.
From now on, additionally suppose that $A$ admits a positive grading $\hat{A} = \bigoplus_{i \in \Z} A_i$ such that $A_0$ is a semisimple $\kk$-algebra.
Then, up to an isomorphism of graded $\hat A$-modules, any simple object $L_\lambda$ of highest weight $\lambda \in \Lambda$ in $\mathcal{C}$ admits a unique graded lift $\hat L_\lambda$ that is pure of degree $0$, and the objects $\Delta_\lambda$, $\nabla_\lambda$, $P_\lambda$ and $I_\lambda$ of $\mathcal{C}$ admit unique graded lifts $\hat \Delta_\lambda$, $\hat \nabla_\lambda$, $\hat P_\lambda$ and $\hat I_\lambda$ such that all non-zero homomorphisms $\Delta_\lambda \to L_\lambda$, $L_\lambda \to \nabla_\lambda$, $P_\lambda \to L_\lambda$ and $L_\lambda \to I_\lambda$ are homomorphisms of graded $\hat A$-modules.
(See Corollary 4 and the introduction to Section 5 in \cite{MazorchukOvsienkoPairing} for the existence and Lemma 2.5.1 in \cite{BeilinsonGinzburgSoergel} for the uniqueness of these graded lifts.)

\begin{Definition}
	We say that $\hat A$ is \emph{standard Koszul} if for all $\lambda,\mu \in \Lambda$ and all $i \in \Z_{\geq 0}$ and $j \in \Z$ with $i \neq j$, we have
	\[ \Ext_{\mathcal{C}_\mathrm{gr}}^i\big( \hat \Delta_\lambda , \hat L_\mu \langle j \rangle \big) = 0  \qquad \text{and} \qquad \Ext_{\mathcal{C}_\mathrm{gr}}^i\big( \hat L_\mu \langle -j \rangle , \hat \nabla_\mu \big) = 0 . \]
	In that case, we also say that \emph{$A$ admits a standard Koszul grading}.
\end{Definition}

If $\hat{A}$ is standard Koszul then $\hat{A}$ is Koszul, $\hat{A}^!$ is standard Koszul and the category $\mathcal{C}^! = A^!\text{-}\mathrm{mod}$ is a highest weight category with weight poset $(\Lambda,\leq^\mathrm{op})$ by Theorems 1, 2 and 3 in \cite{AgostonDlabLukacsQHextensionalgebra}.
We denote the simple, standard, costandard, indecomposable projective and indecomposable injective object of $\mathcal{C}^!$ of highest weight $\lambda \in \Lambda$ by
\[ L_\lambda^! , \qquad \Delta_\lambda^! , \qquad \nabla_\lambda^! , \qquad P_\lambda^! , \qquad I_\lambda^! , \]
respectively, and we write
\[ \hat L_\lambda^! , \qquad \hat \Delta_\lambda^! , \qquad \hat \nabla_\lambda^! , \qquad \hat P_\lambda^! , \qquad \hat I_\lambda^! \]
for their canonical graded lifts in the category $\mathcal{C}^!_\mathrm{gr} =\hat A^!\text{-}\mathrm{grmod}$.

Now we are ready to explain how Koszul duality can be used to relate dimensions of $\Ext$-groups with graded composition multiplicities.
For $\nu,\lambda \in \Lambda$, consider the polynomials
\[ p_{\nu,\lambda} = \sum_{i \geq 0} \dim \Ext_\mathcal{C}^i( L_\lambda , \nabla_\nu ) \cdot v^i \qquad \text{and} \qquad q_{\nu,\lambda} = \sum_{i \in \Z} [ \hat \Delta_\nu : \hat L_\lambda \langle i \rangle ] \cdot v^i . \]

\begin{Lemma} \label{lem:abstractinversionformula}
	Suppose that $\hat A$ is standard Koszul.
	Then
	\[ \sum_{\nu \in \Lambda} p_{\nu,\lambda}(-v) \cdot q_{\nu,\mu} = \delta_{\lambda,\mu} \]
	for all $\lambda,\mu \in \Lambda$.
\end{Lemma}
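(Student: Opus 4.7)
The plan is to compute the graded Euler characteristic
$\sum_{i} (-1)^i \dim \Ext^i_{\mathcal{C}_\mathrm{gr}}(\hat L_\lambda, \hat I_\mu \langle m \rangle)$
in two different ways for each $m \in \Z$, and then to combine the resulting identities by multiplying with $v^m$ and summing over $m$. On one side, injectivity of $\hat I_\mu$ will collapse this sum to a single $\Hom$-group; on the other, the $\hat \nabla$-filtration of $\hat I_\mu$ together with Koszul purity of $\Ext$-groups between simples and costandards will express it in terms of the coefficients of $p_{\nu,\lambda}$. Graded BGG reciprocity will then identify the $\hat \nabla$-multiplicities of $\hat I_\mu$ with the coefficients of $q_{\nu,\mu}$, yielding the claimed inversion formula.

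The first step is to establish \emph{Koszul purity}: for all $\lambda, \nu \in \Lambda$ and all $i \geq 0$ and $j \in \Z$, we have $\Ext^i_{\mathcal{C}_\mathrm{gr}}(\hat L_\lambda, \hat \nabla_\nu \langle j \rangle) = 0$ unless $i = j$. I will deduce this from the Koszul duality equivalence $K$ recalled above: under the standard Koszul hypothesis, $K$ sends $\hat L_\lambda$ to the indecomposable projective $\hat P^!_\lambda$ of $\mathcal{C}^!_\mathrm{gr}$, and $K(\hat \nabla_\nu)$ lies in the heart of $D^b(\hat A^!\text{-}\mathrm{grmod})$. Combining these identifications with the shift formula $K(M\langle j\rangle[i]) \cong K(M)\langle -j\rangle[i-j]$ gives
\[ \Ext^i_{\mathcal{C}_\mathrm{gr}}\big( \hat L_\lambda, \hat \nabla_\nu\langle j \rangle \big) \cong \Hom_{D^b(\hat A^!\text{-}\mathrm{grmod})}\big( \hat P^!_\lambda, K(\hat\nabla_\nu)\langle -j\rangle[i-j] \big), \]
which vanishes for $i \neq j$ since $\hat P^!_\lambda$ is projective and $K(\hat\nabla_\nu)\langle -j\rangle$ lies in the heart. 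As a consequence, $p_{\nu,\lambda}(v) = \sum_i \dim \Ext^i_{\mathcal{C}_\mathrm{gr}}(\hat L_\lambda, \hat \nabla_\nu\langle i \rangle) \cdot v^i$.

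For the main computation, injectivity of $\hat I_\mu$ yields $\Ext^i_{\mathcal{C}_\mathrm{gr}}(\hat L_\lambda, \hat I_\mu\langle m \rangle) = 0$ for $i > 0$, while $\dim \Hom_{\mathcal{C}_\mathrm{gr}}(\hat L_\lambda, \hat I_\mu\langle m\rangle) = \delta_{\lambda,\mu}\delta_{m,0}$ because the socle of $\hat I_\mu$ is pure of degree $0$. Dually, choosing a $\hat \nabla$-filtration of $\hat I_\mu$ and exploiting additivity of the alternating sum of $\Ext$-dimensions along short exact sequences, combined with Koszul purity, gives
\[ \delta_{\lambda,\mu}\delta_{m,0} = \sum_{\nu, k} (-1)^{k+m} (\hat I_\mu : \hat\nabla_\nu\langle k\rangle) \cdot \dim \Ext^{k+m}_{\mathcal{C}_\mathrm{gr}}\big( \hat L_\lambda, \hat \nabla_\nu\langle k+m\rangle \big). \]
Multiplying by $v^m$, summing over $m \in \Z$, and reindexing $s = k+m$ separates the double sum and produces $\delta_{\lambda,\mu} = \sum_\nu p_{\nu,\lambda}(-v) \cdot \sum_k (\hat I_\mu : \hat \nabla_\nu\langle k\rangle) v^{-k}$. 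Finally, graded BGG reciprocity $(\hat I_\mu : \hat \nabla_\nu\langle k\rangle) = [\hat \Delta_\nu : \hat L_\mu\langle -k\rangle]$ rewrites the second factor as $q_{\nu,\mu}(v)$, completing the argument. The main obstacle will be justifying Koszul purity for $\lambda \neq \nu$, which the standard Koszul definition covers only in the diagonal case; the detour through the Koszul duality equivalence $K$ is the most direct way around this.
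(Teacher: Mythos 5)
Your proof is correct and follows essentially the standard Euler-characteristic argument that the paper cites (the proof of Theorem 3.11.4(iii) in \cite{BeilinsonGinzburgSoergel}): injectivity of $\hat I_\mu$, its $\hat\nabla$-filtration, Koszul purity of $\Ext^i_{\mathcal{C}_\mathrm{gr}}(\hat L_\lambda, \hat\nabla_\nu\langle j\rangle)$, and graded BGG reciprocity. Your detour through the Koszul duality equivalence $K$ to obtain the off-diagonal purity is valid; note, however, that the diagonal-only restriction you read into the paper's standard Koszul definition is most likely a typo, since the definition in \cite{AgostonDlabLukacsQHextensionalgebra} requires linear injective coresolutions of \emph{all} costandards, which gives the off-diagonal purity directly.
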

\begin{proof}
	The proof is completely analogous to the proof of part (iii) of Theorem 3.11.4 in \cite{BeilinsonGinzburgSoergel}.
\end{proof}

Now for $\nu,\lambda \in \Lambda$, let us additionally consider the polynomials
\[ p_{\nu,\lambda}^! = \sum_{i \geq 0} \dim \Ext_{\mathcal{C}^!}^i( L_\lambda^! , \nabla_\nu^! ) \cdot v^i \qquad \text{and} \qquad q_{\nu,\lambda}^! = \sum_{i \in \Z} [ \hat \Delta_\nu^! : \hat L_\lambda^! \langle i \rangle ] \cdot v^i . \]

\begin{Lemma} \label{lem:KoszuldualKLpolynomials}
	Suppose that $\hat A$ is standard Koszul.
	Then
	\[ q_{\nu,\lambda}^! = p_{\nu,\lambda} \qquad \text{and} \qquad p_{\nu,\lambda}^! = q_{\nu,\lambda}  \]
	for all $\nu,\lambda \in \Lambda$.
\end{Lemma}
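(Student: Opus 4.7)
The plan is to deduce both identities from the Koszul duality equivalence $K\colon D^b(\mathcal{C}_\mathrm{gr}) \to D^b(\mathcal{C}^!_\mathrm{gr})$ recalled in Section \ref{sec:Koszul}, combined with the standard Koszul purity property and the known behaviour of $K$ on highest-weight objects. Because $(\hat A^!)^! \cong \hat A$ canonically and because $\hat A^!$ is again standard Koszul, the two identities are interchanged by Koszul duality, so it suffices to prove the first one, $q^!_{\nu,\lambda} = p_{\nu,\lambda}$; the second then follows by applying the same argument with the roles of $\hat A$ and $\hat A^!$ swapped.

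For the first identity I would begin by exploiting the standard Koszul property to obtain Ext-purity: for all $i \neq j$ and all $\lambda,\nu \in \Lambda$, one has $\Ext^i_{\mathcal{C}_\mathrm{gr}}(\hat L_\lambda, \hat \nabla_\nu \langle j\rangle) = 0$. Summing over grading shifts, this gives $\dim \Ext^i_\mathcal{C}(L_\lambda, \nabla_\nu) = \dim \Ext^i_{\mathcal{C}_\mathrm{gr}}(\hat L_\lambda, \hat \nabla_\nu \langle i\rangle)$, so that
\[
    p_{\nu,\lambda} = \sum_{i \geq 0} \dim \Ext^i_{\mathcal{C}_\mathrm{gr}}(\hat L_\lambda, \hat \nabla_\nu \langle i\rangle) \cdot v^i.
\]

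Next, I would apply $K$ to each summand. Using $K(M\langle i\rangle) \cong K(M)\langle -i\rangle[-i]$ together with the known identifications $K(\hat L_\lambda) \cong \hat P_\lambda^!$ and $K(\hat \nabla_\nu) \cong \hat \Delta_\nu^!$ (in the highest-weight Koszul duality of \cite{BeilinsonGinzburgSoergel}, in the framework of \cite{AgostonDlabLukacsQHextensionalgebra}), the $i$-th summand rewrites as $\Hom_{D^b(\mathcal{C}^!_\mathrm{gr})}(\hat P_\lambda^!, \hat \Delta_\nu^! \langle -i\rangle)$. Projectivity of $\hat P_\lambda^!$ collapses this to ordinary $\Hom$ in the heart, and the elementary identity $\dim \Hom_{\mathcal{C}^!_\mathrm{gr}}(\hat P_\lambda^!, M\langle -i\rangle) = [M : \hat L_\lambda^! \langle i\rangle]$ (valid after passing to a basic algebra by Morita equivalence) yields $p_{\nu,\lambda} = q^!_{\nu,\lambda}$ upon summing.

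The main obstacle is the careful matching of conventions in the identification of $K$ on highest-weight objects: the precise normalizations (grading shifts, whether simples correspond to indecomposable projectives or injectives, whether standards are exchanged with standards or costandards, and the reversal of the weight poset to $(\Lambda,\leq^\mathrm{op})$) must be tracked to ensure that one obtains exactly $K(\hat L_\lambda) \cong \hat P_\lambda^!$ and $K(\hat \nabla_\nu) \cong \hat \Delta_\nu^!$ rather than, say, $\hat I_\lambda^!$ or $\hat \nabla_\nu^!$. Once this bookkeeping is settled, the argument is essentially the one used in the proof of \cite[Theorem 3.11.4]{BeilinsonGinzburgSoergel}.
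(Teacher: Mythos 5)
Your proposal is correct and takes essentially the same approach as the paper: both rely on the Koszul duality equivalence $K$ together with the identifications $K(\hat L_\lambda) \cong \hat P_\lambda^!$ and $K(\hat \nabla_\nu) \cong \hat \Delta_\nu^!$, combined with the purity that standard Koszulity gives for $\Ext^i_{\mathcal{C}_\mathrm{gr}}(\hat L_\lambda, \hat\nabla_\nu\langle j\rangle)$, reproducing the argument of Theorem 3.11.4(i) of Be\u{\i}linson--Ginzburg--Soergel which the paper simply cites. The only cosmetic difference is that you obtain the second identity by swapping $\hat A$ and $\hat A^!$ (using $(\hat A^!)^! \cong \hat A$), whereas the paper additionally records $K(\hat I_\lambda) \cong \hat L_\lambda^!$, but both routes are standard and lead to the same conclusion.
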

\begin{proof}
	The proof is completely analogous to the proof of part (i) of Theorem 3.11.4 in \cite{BeilinsonGinzburgSoergel}, using the fact that the Koszul duality functor satisfies
	\begin{equation*}
	K( \hat L_\lambda ) \cong \hat P_\lambda^! , \qquad K( \hat \nabla_\lambda ) \cong \hat \Delta_\lambda^! , \qquad K( \hat I_\lambda ) \cong \hat L_\lambda^!
	\end{equation*}
	for all $\lambda \in \Lambda$ by Proposition 2.7 in \cite{AgostonDlabLukacsQHextensionalgebra} and Theorem 2.12.5 in \cite{BeilinsonGinzburgSoergel}.
\end{proof}

\section{Complex semisimple Lie algebras} \label{sec:categoryO}

Let $\mathfrak{g}$ be a complex semisimple Lie algebra with a Borel subalgebra $\mathfrak{b}$ and a Cartan subalgebra $\mathfrak{h} \subseteq \mathfrak{b}$, and let $\Phi \subseteq \mathfrak{h}^*$ be the root system of $\mathfrak{g}$ with respect to $\mathfrak{h}$.
We write $\Phi^+ \subseteq \Phi$ for the positive system corresponding to $\mathfrak{b}$ and let $\Pi \subseteq \Phi^+$ be a base of $\Phi$.
The Weyl group
$W = \langle s_\alpha \mid \alpha \in \Phi \rangle \subseteq \GL(\mathfrak{h}^*)$
is a finite Coxeter group with simple reflections $S = \{ s_\alpha \mid \alpha \in \Pi \}$, and we denote by $w_0$ the longest element of $W$.
We write $\alpha^\vee \in \mathfrak{h}$ for the coroot of $\alpha \in \Phi$ and let
\[ X = \{ \lambda \in \mathfrak{h}^* \mid \lambda(\alpha^\vee) \in \Z \text{ for all } \alpha \in \Phi \} \qquad \text{and} \qquad X^+ = \{ \lambda \in X \mid \lambda(\alpha^\vee) \geq 0 \text{ for all } \alpha \in \Phi^+ \} \]
be the sets of \emph{integral weights} and \emph{dominant weights}, respectively.
Furthermore, we write $\rho = \frac12 \cdot \sum_{\alpha \in \Phi^+} \alpha$ for the half-sum of all positive roots and consider the \emph{dot action} of $W$ on $\mathfrak{h}^*$, which is defined by
\[ w \Cdot \lambda = w( \lambda + \rho ) - \rho \]
for $w \in W$ and $\lambda \in \mathfrak{h}^*$.

We consider the category $\mathcal{O}$ of finitely generated $\mathfrak{g}$-modules that are locally $\mathfrak{b}$-finite and admit a weight space decomposition with respect to $\mathfrak{h}$; see for instance \cite{HumphreysCategoryO}.
For all $\lambda \in \mathfrak{h}^*$, the Verma module $\Delta_\lambda$ of highest weight $\lambda$ and its unique simple quotient $L_\lambda$ are objects of $\mathcal{O}$, and so is the dual Verma module $\nabla_\lambda$.
For every weight $\lambda \in - \rho - X^+ = \{ - \rho - \lambda \mid \lambda \in X^+ \}$, we denote by $\mathcal{O}_\lambda$ the block of $\mathcal{O}$ containing the simple $\mathfrak{g}$-module $L_\lambda$.
The stabilizer of $\lambda$ with respect to the dot action of $W$ on $\mathfrak{h}^*$ is a parabolic subgroup of $W$, and for $I \subseteq S$ such that $\Stab_W(\lambda) = W_I = \langle I \rangle$, the isomorphism classes of simple $\mathfrak{g}$-modules in $\mathcal{O}_\lambda$ are in bijection with the set $W^I$ of elements $w \in W$ that have minimal length in the coset $w W_I$ (see Appendix \ref{sec:appendix}), via $w \mapsto L_{w\Cdot\lambda}$.

Now let $J \subseteq S$ and denote by $\mathfrak{p}_J$ the parabolic subalgebra of $\mathfrak{g}$ that is generated by $\mathfrak{b}$ together with the root spaces $\mathfrak{g}_{-\alpha}$ for $\alpha \in \Pi$ with $s_\alpha \in J$.
We write $\mathcal{O}^{\mathfrak{p}_J}$ for the full subcategory of $\mathcal{O}$ whose objects are the locally $\mathfrak{p}_J$-finite $\mathfrak{g}$-modules in $\mathcal{O}$, and for $\lambda \in -\rho - X^+$, let $\mathcal{O}_\lambda^{\mathfrak{p}_J}$ be the full subcategory of $\mathcal{O}_\lambda$ whose objects are the locally $\mathfrak{p}_J$-finite $\mathfrak{g}$-modules in $\mathcal{O}_\lambda$.
For $I \subseteq S$ such that $\Stab_W(\lambda) = W_I$ and for $w \in W^I$, the simple $\mathfrak{g}$-module $L_{w\Cdot\lambda}$ belongs to $\mathcal{O}_\lambda^{\mathfrak{p}_J}$ if and only if $(w\Cdot\lambda)(\alpha^\vee) \geq 0$ for all $\alpha \in J$, or equivalently, if $w \in w_J \prescript{J}{}{W} \cap W^I = w_J \prescript{J}{}{W}^I_\mathrm{reg}$.
Here, we write $w_J$ for the longest element of $W_J$ and $\prescript{J}{}{W}$ for the set of elements $w \in W$ that have minimal length in the coset $W_J w$, and we denote by $\prescript{J}{}{W}^I_\mathrm{reg}$ the set of minimal length representatives for the regular double cosets in $W_J \backslash W / W_I$ (see Appendix \ref{sec:appendix}).
In particular, the isomorphism classes of simple $\mathfrak{g}$-modules in $\mathcal{O}_\lambda^{\mathfrak{p}_J}$ are in bijection with the set $w_J \prescript{J}{}{W}^I_\mathrm{reg}$ via $w \mapsto L_{w\Cdot\lambda}$.
For $\mu \in \mathfrak{h}^*$, let us write $\Delta^{\mathfrak{p}_J}_\mu$ for the largest quotient of $\Delta_\mu$ that belongs to $\mathcal{O}^{\mathfrak{p}_J}$ and $\nabla^{\mathfrak{p}_J}_\mu$ for the largest submodule of $\nabla_\mu$ that belongs to $\mathcal{O}^{\mathfrak{p}_J}$, and call these $\mathfrak{g}$-modules the \emph{generalized} or \emph{parabolic} Verma module and dual Verma module, respectively, of highest weight $\mu$ with respect to $\mathfrak{p}_J$.
Then $\mathcal{O}_\lambda^{\mathfrak{p}_J}$ is a highest weight category with weight poset $(w_J \prescript{J}{}{W}^I_\mathrm{reg},\leq)$, where $\leq$ denotes the Bruhat order, with simple objects $L_{w\Cdot\lambda}$, standard objects $\Delta^{\mathfrak{p}_J}_{w\Cdot\lambda}$ and costandard objects $\nabla^{\mathfrak{p}_J}_{w\Cdot\lambda}$ for $w \in w_J \prescript{J}{}{W}^I_\mathrm{reg}$.
Let us further write $P^{\mathfrak{p}_J}_{w\Cdot\lambda}$ for the projective cover of $L_{w\Cdot\lambda}$ in $\mathcal{O}^{\mathfrak{p}_J}_\lambda$, for $w \in w_J \prescript{J}{}{W}^I_\mathrm{reg}$.
We define
\[ \mathbf{P}^{\mathfrak{p}_J}_\lambda \coloneqq \bigoplus_{ w \in w_J \prescript{J}{}{W}^I_\mathrm{reg} } P^{\mathfrak{p}_J}_{w \Cdot \lambda} \qquad \text{and} \qquad A_\lambda^{\mathfrak{p}_J} \coloneqq \End_{\mathcal{O}^{\mathfrak{p}_J}}\big( \mathbf{P}^{\mathfrak{p}_J}_\lambda \big)^\mathrm{op} , \]
so that $\mathcal{O}^{\mathfrak{p}_J}_\lambda$ is equivalent to the category $A^{\mathfrak{p}_J}_\lambda\text{-}\mathrm{mod}$.
The following theorem combines results of E.\ Backelin and V.\ Mazorchuk; it will be crucial for our determination of the dimensions of $\Ext$-groups between simple objects and costandard objects in $\mathcal{O}^{\mathfrak{p}_J}_\lambda$.

\begin{Theorem} \label{thm:categoryOkoszuldual}
	Let $\lambda,\mu \in - \rho - X^+$ and let $I,J \subseteq S$ such that $\Stab_W(\lambda) = W_I$ and $\Stab_W(\mu) = W_J$.
	Then the algebra $A_\lambda^{\mathfrak{p}_J}$ admits a standard Koszul grading, and its Koszul dual $(A_\lambda^{\mathfrak{p}_J})^!$ is isomorphic to the algebra $A_{-w_0\mu}^{\mathfrak{p}_I}$.
\end{Theorem}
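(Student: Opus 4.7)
The plan is to deduce this theorem by combining two established results from the literature, as the author himself indicates.

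For the Koszulity assertion and the identification of the Koszul dual, I would invoke Backelin's main theorem in \cite{BackelinKoszulduality}, which shows that every block of parabolic-singular category $\mathcal{O}$ is Koszul and identifies its Koszul dual as another such block in which the parabolic data (encoded by $J$) and the singularity data (encoded by $I$) are interchanged. Concretely, Backelin establishes that $(A_\lambda^{\mathfrak{p}_J})^!$ is Morita equivalent to a block algebra of the form $A_\nu^{\mathfrak{p}_I}$ for some $\nu \in -\rho - X^+$. To match $\nu$ with $-w_0 \mu$, one uses that the involution $-w_0$ preserves $-\rho - X^+$ and sends a weight with dot-stabilizer $W_J$ to one with dot-stabilizer $w_0 W_J w_0^{-1}$; the appearance of $-w_0$ reflects the order reversal on the weight poset inherent in Koszul duality. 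Backelin's proof itself starts from the Koszulity of the regular principal block $A_{-\rho}$ established in \cite{BeilinsonGinzburgSoergel}, realizes parabolic-singular blocks as suitable centralizer or summand algebras of $A_{-\rho}$ via translation functors (on-wall translation produces singular blocks, $\mathfrak{p}$-finite truncation produces parabolic blocks), and transfers the Koszul grading through this identification; Koszul duality then exchanges these two constructions.

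To upgrade from Koszulity to standard Koszulity, I would cite Mazorchuk's work (\cite{MazorchukOvsienkoPairing} together with its companions on linear complexes of tilting modules). The key input is that the graded standard objects $\hat\Delta^{\mathfrak{p}_J}_{w\Cdot\lambda}$ admit linear graded projective resolutions and the graded costandard objects $\hat\nabla^{\mathfrak{p}_J}_{w\Cdot\lambda}$ admit linear graded injective resolutions, which, by standard homological algebra, is equivalent to the vanishing conditions defining standard Koszulity. Such linear resolutions are produced by applying graded translation functors to the (graded) BGG resolutions in the regular principal block; these functors preserve standards, costandards, and the linearity of resolutions, so linearity propagates from the regular to the parabolic-singular setting.

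The main technical obstacle I anticipate is aligning conventions. Different sources normalize the anti-dominance condition, the $\rho$-shift, and the choice of left vs.\ right parabolic structure slightly differently, and extracting the precise identification $(A_\lambda^{\mathfrak{p}_J})^! \cong A_{-w_0\mu}^{\mathfrak{p}_I}$ from Backelin's theorem requires carefully tracking the $w_0$-twist and the interchange of the labelling sets $I$ and $J$ (including the fact that $\Stab_W(-w_0\mu) = w_0 W_J w_0^{-1}$ rather than $W_J$ itself). Once these bookkeeping issues are pinned down, both the Koszulity statement and the identification of the Koszul dual follow directly from the cited results.
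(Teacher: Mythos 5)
Your proposal is correct and takes essentially the same route as the paper: cite Theorem 1.1 of \cite{BackelinKoszulduality} for Koszulity together with the identification $(A_\lambda^{\mathfrak{p}_J})^! \cong A_{-w_0\mu}^{\mathfrak{p}_I}$, and cite Mazorchuk for standard Koszulity (the paper points specifically at Theorem 5.1 of \cite{MazorchukApplicationslinearcomplexes}, which is the ``companion on linear complexes'' you gesture at). Your bookkeeping remarks about $-w_0$ and $\Stab_W(-w_0\mu) = w_0 W_J w_0$ are accurate and mirror the footnote the paper itself includes about relabeling blocks by $-\rho - X^+$ rather than $-\rho + X^+$.
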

\begin{proof}
	By Theorem 1.1 in \cite{BackelinKoszulduality}, the algebra $A_\lambda^{\mathfrak{p}_J}$ admits a Koszul grading and its Koszul dual $(A_\lambda^{\mathfrak{p}_J})^!$ is isomorphic to the algebra $A_{-w_0\mu}^{\mathfrak{p}_I}$.
	The standard Koszulity of the Koszul graded version of $A_\lambda^{\mathfrak{p}_J}$ is established in Theorem 5.1 of \cite{MazorchukApplicationslinearcomplexes}.
\end{proof}

We keep the notation and assumptions of Theorem \ref{thm:categoryOkoszuldual} and define
\[ \mathbf{L}^{\mathfrak{p}_J}_\lambda \coloneqq \bigoplus_{ w \in w_J \prescript{J}{}{W}^I_\mathrm{reg} } L_{w \Cdot \lambda} , \qquad \text{so that} \qquad ( A_\lambda^{\mathfrak{p}_J} )^! \cong \Big( \bigoplus_{i \geq 0} \Ext_{\mathcal{O}^{\mathfrak{p}_J}}^i\big( \mathbf{L}^{\mathfrak{p}_J}_\lambda , \mathbf{L}^{\mathfrak{p}_J}_\lambda \big) \Big)^\mathrm{op} . \]
For $w \in w_J \prescript{J}{}{W}^I_\mathrm{reg}$, we further write $e^{\mathfrak{p}_J}_{\lambda,w}$ for the idempotent in $A^{\mathfrak{p}_J}_\lambda$ corresponding to the canonical projection $\mathrm{P}^{\mathfrak{p}_J}_\lambda \to P^{\mathfrak{p}_J}_{w\Cdot\lambda}$ and $f^{\mathfrak{p}_J}_{\lambda,w}$ for the idempotent in the Koszul dual $( A^{\mathfrak{p}_J}_\lambda )^!$ corresponding to the canonical projection $\mathrm{L}^{\mathfrak{p}_J}_\lambda \to L_{w\Cdot\lambda}$.
Then, according to Remark 3.8 (and Proposition 3.1) in \cite{BackelinKoszulduality}, the isomorphism
\[ \varphi \colon ( A^{\mathfrak{p}_J}_\lambda )^! \longrightarrow A^{\mathfrak{p}_I}_{-w_0\mu} \]
from Theorem \ref{thm:categoryOkoszuldual} can be chosen in such a way that
\[ \varphi\big( f^{\mathfrak{p}_J}_{\lambda,w} \big) = e^{\mathfrak{p}_I}_{-w_0\mu,w^{-1} w_0} \]
for all $w \in w_J \prescript{J}{}{W}^I_\mathrm{reg}$.%
\footnote{Observe that the correspondence between the idempotents in $( A^{\mathfrak{p}_J}_\lambda )^!$ and $A^{\mathfrak{p}_I}_{-w_0\mu}$ that we describe here differs from the one in \cite{BackelinKoszulduality} because we label the blocks of $\mathcal{O}$ by weights in $-\rho - X^+$, rather than $-\rho + X^+$.}
In particular, if we write $L^{\mathfrak{p}_J,!}_{\lambda,w}$ for the simple $( A^{\mathfrak{p}_J}_\lambda )^!$-module corresponding to an element $w \in w_J \prescript{J}{}{W}^I_\mathrm{reg}$ (as described in Section \ref{sec:Koszul}) then the functor
\[ ( \mathcal{O}^{\mathfrak{p}_J}_\lambda )^! \coloneqq ( A^{\mathfrak{p}_J}_\lambda )^!\text{-}\mathrm{mod} \longrightarrow A^{\mathfrak{p}_J}_{-w_0\mu}\text{-}\mathrm{mod} \cong \mathcal{O}^{\mathfrak{p}_J}_{-w_0\mu} \]
induced by $\varphi^{-1}$ sends the simple $( A^{\mathfrak{p}_J}_\lambda )^!$-module $L^{\mathfrak{p}_J,!}_{\lambda,w}$ to the simple $\mathfrak{g}$-module $L_{w^{-1} w_0\Cdot(-w_0\mu)}$.

Now for a subset $I \subseteq S$ and $x,y \in \prescript{I}{}{W}$, let us denote by $m^I_{x,y}$, $m_I^{x,y}$, $n^I_{x,y}$ and $n_I^{x,y}$ the corresponding spherical and anti-spherical (inverse) parabolic Kazhdan-Lusztig polynomials, as defined in Appendix \ref{sec:appendix}.
With all of the above notation in place, we can prove the first main result.

\begin{Theorem} \label{thm:extcategoryO}
	Let $\lambda,\mu \in - \rho - X^+$ and let $I,J \subseteq S$ such that $\Stab_W(\lambda) = W_I$ and $\Stab_W(\mu) = W_J$.
	For $x,z \in w_J \prescript{J}{}{}{W}^I_\mathrm{reg}$, we have
	\[ \sum_{i \geq 0} \dim \Ext_{\mathcal{O}^{\mathfrak{p}_J}}^i\big( L_{x\Cdot\lambda} , \nabla^{\mathfrak{p}_J}_{z\Cdot\lambda} \big) \cdot v^i = n^I_{z^{-1},x^{-1}} . \]
\end{Theorem}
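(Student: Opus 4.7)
The plan is to combine the Koszul duality machinery of Section \ref{sec:Koszul} with a graded translation functor argument, in order to reduce the claim to the known formula for dimensions of $\Ext$-groups in a regular parabolic block of $\mathcal{O}$. First, I would apply Lemma \ref{lem:KoszuldualKLpolynomials} to the standard Koszul algebra $\hat A_\lambda^{\mathfrak{p}_J}$ of Theorem \ref{thm:categoryOkoszuldual}. Combined with the identification of simples via the isomorphism $\varphi$ recalled after Theorem \ref{thm:categoryOkoszuldual}, this transforms the left-hand side into the graded composition multiplicity polynomial
\[ \sum_{i \in \Z} \big[ \hat\Delta^{\mathfrak{p}_I}_{z^{-1} w_0 \Cdot (-w_0\mu)} : \hat L_{x^{-1} w_0 \Cdot (-w_0\mu)} \langle i \rangle \big] \cdot v^i \]
in the singular parabolic block $\mathcal{O}^{\mathfrak{p}_I}_{-w_0\mu}$, whose weight poset is a set of double-coset representatives involving the stabilizer of $-w_0\mu$.

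Next, I would fix a regular antidominant integral weight $\nu$ and employ a graded lift of the translation functor $T \colon \mathcal{O}^{\mathfrak{p}_I}_\nu \to \mathcal{O}^{\mathfrak{p}_I}_{-w_0\mu}$. On appropriate graded standards and simples, $T$ sends $\hat\Delta^{\mathfrak{p}_I}_{w\Cdot\nu}$ to $\hat\Delta^{\mathfrak{p}_I}_{w\Cdot(-w_0\mu)}$ (up to a controlled grading shift) and $\hat L_{w\Cdot\nu}$ to $\hat L_{w\Cdot(-w_0\mu)}$ whenever $w$ is maximal in its coset for $\Stab_W(-w_0\mu)$, while vanishing on simples indexed by non-maximal representatives. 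This identifies the graded multiplicity displayed above with the corresponding graded composition multiplicity in the regular block $\mathcal{O}^{\mathfrak{p}_I}_\nu$. In the latter, these graded multiplicities are coefficients of anti-spherical inverse parabolic Kazhdan-Lusztig polynomials by Soergel's regular parabolic analogue of Vogan's formula (combined with the standard Koszul grading on the regular parabolic block), and tracking the labels through the map $w \mapsto w^{-1} w_0$ coming from Koszul duality yields precisely $n^I_{z^{-1}, x^{-1}}$.

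The main obstacle will be the careful bookkeeping of indices through the two identifications. In particular, one needs to verify that the map $w \mapsto w^{-1} w_0$ coming from Koszul duality restricts to the correct bijection between $w_J \prescript{J}{}{W}^I_\mathrm{reg}$ and the weight poset of the dual block, and that it aligns with the conventions of the anti-spherical polynomials $n^I$ recorded in Appendix \ref{sec:appendix}; one must also confirm the claimed behavior of a graded lift of the translation functor on the graded standards and simples in the parabolic setting. Once these identifications are in place, the formula $n^I_{z^{-1},x^{-1}}$ follows by direct substitution; the essential non-formal input is the standard Koszulity of $\hat A^{\mathfrak{p}_J}_\lambda$ supplied by Theorem \ref{thm:categoryOkoszuldual}, without which Lemma \ref{lem:KoszuldualKLpolynomials} would not be applicable in this singular parabolic setting.
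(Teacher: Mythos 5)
Your proposal is correct and follows essentially the same route as the paper: Lemma \ref{lem:KoszuldualKLpolynomials} together with Theorem \ref{thm:categoryOkoszuldual} converts the Ext-polynomial into a graded decomposition number in a Koszul-dual singular parabolic block, a graded lift of a translation functor (from \cite{BackelinKoszulduality}) identifies this with the corresponding decomposition number in a regular parabolic block, and the regular parabolic case is supplied by Theorem 3.11.4 of \cite{BeilinsonGinzburgSoergel}. The only cosmetic difference is that the paper runs the translation argument in $\mathcal{O}^{\mathfrak{p}_J}_\lambda$ itself and then invokes the resulting formula for $q^{\mathfrak{p}_I}_{-w_0\mu,\,\cdot\,,\,\cdot}$ by the $I \leftrightarrow J$ symmetry, whereas you translate directly on the Koszul-dual side $\mathcal{O}^{\mathfrak{p}_I}_{-w_0\mu}$.
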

\begin{proof}
	For $w \in w_J \prescript{J}{}{W}^I_\mathrm{reg}$, let us write $\hat L^{\mathfrak{p}_J}_{w\Cdot\lambda}$ and $\hat \Delta^{\mathfrak{p}_J}_{w\Cdot\lambda}$ for the canonical graded lifts of $L_{w\Cdot\lambda}$ and $\Delta^{\mathfrak{p}_J}_{w\Cdot\lambda}$, respectively, in the category of graded modules over a Koszul graded version of $A^{\mathfrak{p}_J}_\lambda$.
	Then, according to Lemma \ref{lem:KoszuldualKLpolynomials} and the above discussion of Koszul duality for $\mathcal{O}^{\mathfrak{p}_J}_\lambda$, the polynomials
	\[ p^{\mathfrak{p}_J}_{\lambda,z,x} = \sum_{i \geq 0} \dim \Ext_{\mathcal{O}^{\mathfrak{p}_J}}^i( L_{x\Cdot\lambda} , \nabla^{\mathfrak{p}_J}_{z\Cdot \lambda} ) \cdot v^i \qquad \text{and} \qquad q^{\mathfrak{p}_J}_{\lambda,z,y} = \sum_{i \geq 0} [ \hat \Delta^{\mathfrak{p}_J}_{z\Cdot \lambda} : \hat L^{\mathfrak{p}_J}_{y\Cdot\lambda} \langle i \rangle ] \cdot v^i \]
	with $x,y,z \in w_J \prescript{J}{}{W}^I_\mathrm{reg}$ are related via the equations
	\begin{equation} \label{eq:KoszuldualKLpolynomialscategoryO}
	p^{\mathfrak{p}_J}_{\lambda,z,x} = q^{\mathfrak{p}_I}_{-w_0\mu,z^{-1}w_0,x^{-1}w_0} \qquad \text{and} \qquad q^{\mathfrak{p}_J}_{\lambda,z,y} = p^{\mathfrak{p}_I}_{-w_0\mu,z^{-1}w_0,y^{-1}w_0} .
	\end{equation}
	We first consider a special case of the formula in the theorem, which we will then use to prove the general statement:
	For $\lambda^\prime \in - \rho - X^+$ with $\Stab_W(\lambda^\prime) = \{ e \}$, we have
	\[ q^{\mathfrak{p}_J}_{\lambda^\prime,z,y} = p^{\mathfrak{b}}_{-w_0\mu,z^{-1}w_0,y^{-1}w_0} = n^J_{zw_0,yw_0} \]
	for all $y,z \in w_J \prescript{J}{}{W}$ by parts (i) and (iv) of Theorem 3.11.4 in \cite{BeilinsonGinzburgSoergel}.
	Now let $\hat A^{\mathfrak{p}_J}_\lambda$ and $\hat A^{\mathfrak{p}_J}_{\lambda^\prime}$ be Koszul graded algebras whose underlying ungraded algebras are $A^{\mathfrak{p}_J}_\lambda$ and $A^{\mathfrak{p}_J}_{\lambda^\prime}$, respectively, and let
	\[ \mathcal{O}^{\mathfrak{p}_J}_{\lambda,\mathrm{gr}} = \hat A^{\mathfrak{p}_J}_\lambda \text{-} \mathrm{mod} \qquad \text{and} \qquad \mathcal{O}^{\mathfrak{p}_J}_{\lambda^\prime,\mathrm{gr}} = \hat A^{\mathfrak{p}_J}_{\lambda^\prime} \text{-} \mathrm{mod} \]
	be the corresponding categories of graded modules.
	For $J=\varnothing$, we omit the superscript $\mathfrak{p}_J$ and write
	\[ \mathcal{O}_{\lambda,\mathrm{gr}} = \hat A_\lambda \text{-} \mathrm{mod} \qquad \text{and} \qquad \mathcal{O}_{\lambda^\prime,\mathrm{gr}} = \hat A_{\lambda^\prime} \text{-} \mathrm{mod} . \]
	Then, by the proof of Proposition 3.2 in \cite{BackelinKoszulduality}, the graded algebra $\hat A^{\mathfrak{p}_J}_\lambda$ is the quotient of $\hat A_\lambda$ by some homogeneous ideal, whence we can consider $\mathcal{O}^{\mathfrak{p}_J}_{\lambda,\mathrm{gr}}$ as a full subcategory of $\mathcal{O}_{\lambda,\mathrm{gr}}$, and similarly for $\mathcal{O}^{\mathfrak{p}_J}_{\lambda^\prime,\mathrm{gr}}$ and $\mathcal{O}_{\lambda^\prime,\mathrm{gr}}$.
	(See also Lemma 2.2 in \cite{ShanVaragnoloVasserot} and Theorem 3.5.3 in \cite{BeilinsonGinzburgSoergel}.)
	 Furthermore, as shown in part (d) of the proof of Lemma 3.3 in \cite{BackelinKoszulduality}, there is an exact functor
	 \[ \hat T_{\lambda^\prime}^\lambda \colon \mathcal{O}_{\lambda^\prime,\mathrm{gr}} \longrightarrow \mathcal{O}_{\lambda,\mathrm{gr}} \]
	 which lifts the usual translation functor $T_{\lambda^\prime}^\lambda \colon \mathcal{O}_{\lambda^\prime} \to \mathcal{O}_\lambda$ and sends any pure object in $\mathcal{O}_{\lambda^\prime,\mathrm{gr}}$ to a pure object of the same degree in $\mathcal{O}_{\lambda,\mathrm{gr}}$.
	 Using well-known properties of translation functors (see Section 7.9 in \cite{HumphreysCategoryO}), it follows that
	 \[ \hat T_{\lambda^\prime}^\lambda \hat L_{y\Cdot\lambda^\prime} \cong \begin{cases} \hat L_{y\Cdot\lambda} & \text{if } y \in W^I , \\ 0 & \text{otherwise} \end{cases} \]
	 for all $y \in W$ and that $\hat T_{\lambda^\prime}^\lambda$ restricts to a functor from $\mathcal{O}^{\mathfrak{p}_J}_{\lambda^\prime,\mathrm{gr}}$ to $\mathcal{O}^{\mathfrak{p}_J}_{\lambda,\mathrm{gr}}$ with
	 \[ \hat T_{\lambda^\prime}^\lambda \hat \Delta^{\mathfrak{p}_J}_{z\Cdot\lambda^\prime} \cong \hat \Delta^{\mathfrak{p}_J}_{z\Cdot\lambda} \]
	 for all $z \in w_J \prescript{J}{}{W}^I_\mathrm{reg}$.%
	 \footnote{This follows from the corresponding statement for Verma modules (see Section 7.6 in \cite{HumphreysCategoryO}) and the facts that $\Delta^{\mathfrak{p}_J}_{z \Cdot \lambda}$ arises from $\Delta_{z \Cdot \lambda}$ by applying a \emph{parabolic truncation functor} (and similarly for $\lambda^\prime$) and that parabolic truncation functors commute with translation functors; see Lemma 2.6 in \cite{BackelinKoszulduality}.
	 See also Section 2.25 in \cite{JantzenHabilitation}.}
	 Now the exactness of $\hat T_{\lambda^\prime}^\lambda$ implies that
	 \[ [ \hat \Delta^{\mathfrak{p}_J}_{z\Cdot\lambda} : \hat L^{\mathfrak{p}_J}_{y\Cdot\lambda} \langle i \rangle ] = [ \hat \Delta^{\mathfrak{p}_J}_{z\Cdot\lambda^\prime} : \hat L^{\mathfrak{p}_J}_{y\Cdot\lambda^\prime} \langle i \rangle ] \]
	for all $y,z \in w_J \prescript{J}{}{W}^I_\mathrm{reg}$ and all $i \in \Z$.
	In particular, the special case at the beginning of the proof implies that $q^{\mathfrak{p}_J}_{\lambda,z,y} = q^{\mathfrak{p}_J}_{\lambda^\prime,z,y} = n^J_{zw_0,yw_0}$ for all $y,z \in w_J \prescript{J}{}{W}^I_\mathrm{reg}$, and using \eqref{eq:KoszuldualKLpolynomialscategoryO}, we obtain
	\[ p^{\mathfrak{p}_J}_{\lambda,z,x} = q^{\mathfrak{p}_I}_{-w_0 \mu,z^{-1}w_0,x^{-1}w_0} = n^I_{z^{-1},x^{-1}} \]
	for all $x,z \in w_J \prescript{J}{}{W}^I_\mathrm{reg}$, as required.
\end{proof}

\section{Affine Kac-Moody algebras} \label{sec:KacMoody}

Let us keep the notation from the previous section and additionally assume that $\mathfrak{g}$ is a complex \emph{simple} Lie algebra.
Recall that we fix a Borel subalgebra $\mathfrak{b}$ and a Cartan subalgebra $\mathfrak{h} \subseteq \mathfrak{b}$ of $\mathfrak{g}$.
We consider the affine Kac-Moody Lie algebra
\[ \mathfrak{\tilde g} = ( \C[t^{\pm1}] \otimes_\C \mathfrak{g} ) \oplus \C c \oplus \C d \]
as in \cite[Section 6]{TanisakiCharacterFormulas} and its subalgebras
\[ \mathfrak{\tilde b} = \mathfrak{b} \oplus ( t \C[t] \otimes \mathfrak{g} ) \oplus \C c \oplus \C d \qquad \text{and} \qquad \mathfrak{\tilde h} = \mathfrak{h} \oplus \C c \oplus \C d , \]
and we view $\mathfrak{h}^*$ as a subspace of $\mathfrak{\tilde h}^*$, with the convention that $\lambda(c) = \lambda(d) = 0$ for all $\lambda \in \mathfrak{h}^*$.
Let $\tilde \Phi$ be the root system of $\mathfrak{\tilde g}$ and let $\tilde \Phi^+$ be the set of positive roots corresponding to $\mathfrak{\tilde b}$, again as in \cite[Section 6]{TanisakiCharacterFormulas}.
Further let $\tilde \Pi = \Pi \sqcup \{ \alpha_0 \}$ be the set of simple roots in $\tilde \Phi$ and let
\[\tilde W = \langle s_\alpha \mid \alpha \in \tilde \Pi \rangle \subseteq \GL(\mathfrak{\tilde h}^*)\]
be the \emph{(affine) Weyl group} of $\mathfrak{\tilde g}$.
It is a Coxeter group with set of simple reflections $\tilde S = \{ s_\alpha \mid \alpha \in \tilde \Pi \}$ and it is canonically isomorphic to the semidirect product $W \rtimes \Z\Phi^\vee$, where $W$ denotes the Weyl group of $\mathfrak{g}$ and $\Z\Phi^\vee$ is the coroot lattice of $\mathfrak{g}$ (see Proposition 13.1.7 in \cite{KumarKacMoody}).
Let us write $\alpha^\vee \in \mathfrak{\tilde h}$ for the coroot corresponding to $\alpha \in \tilde \Pi$ and define $\tilde \rho \in \mathfrak{\tilde h}^*$ by $\tilde \rho(\alpha^\vee) = 1$ for all $\alpha \in \tilde \Pi$ and $\tilde \rho(d) = 0$.
The \emph{dot action} of $\tilde W$ on $\mathfrak{\tilde h}^*$ is given by
\[ w \Cdot \lambda = w( \lambda + \tilde \rho ) - \tilde \rho \]
for all $w \in \tilde W$ and $\lambda \in \mathfrak{\tilde h}^*$.
We say that $\lambda \in \mathfrak{\tilde h}^*$ has \emph{level} $e \in \C$ if $e = ( \lambda + \tilde \rho )(c)$ and we set
\[ \lambda^\prime \coloneqq - \lambda - 2 \tilde \rho , \]
so that $\lambda^\prime$ has level $-e$ when $\lambda$ has level $e$ and $(w \Cdot \lambda)^\prime = w\Cdot\lambda^\prime$ for all $w \in \tilde W$.
The sets of \emph{integral} and \emph{dominant} weights are defined by
\[ \tilde X = \{ \lambda \in \mathfrak{\tilde h}^* \mid \lambda(\alpha^\vee) \in \Z \text{ for all } \alpha \in \tilde \Pi \} \qquad \text{and} \qquad \tilde X^+ = \{ \lambda \in \tilde X \mid \lambda(\alpha^\vee) \geq 0 \text{ for all } \alpha \in \tilde \Pi \} , \]
respectively, and we note that for every integral weight $\mu \in \tilde X$ of positive (or negative) level, there is a unique weight $\nu \in \tilde X^+ \setminus\{0\}$ such that $\mu$ belongs to the $\tilde W$-orbit of $\nu - \tilde \rho$ (respectively $-\nu - \tilde \rho$) with respect to the dot action; see for instance Section 6.3 in \cite{BrundanStroppel}.

Now let us consider the category $\mathcal{\tilde O}$ of $\mathfrak{\tilde g}$-modules that admit a weight space decomposition with finite-dimensional weight spaces with respect to $\mathfrak{\tilde h}$ and whose set of weights is contained in the union of finitely many sets of the form $\{ \mu \in \mathfrak{\tilde h}^* \mid \mu \leq \lambda \}$ for $\lambda \in \mathfrak{\tilde h}^*$, where we write $\leq$ for the partial order on $\mathfrak{\tilde h}^*$ that is defined by
\[ \mu \leq \lambda \qquad \text{if and only if} \qquad \lambda - \mu \in \sum_{\alpha \in \tilde\Pi} \Z_{\geq 0} \cdot \alpha . \]
For all $\lambda \in \mathfrak{\tilde h}^*$, the Verma module $\Delta_\lambda$ of highest weight $\lambda$, the dual Verma module $\nabla_\lambda$ and the unique simple quotient $L_\lambda$ of $\Delta_\lambda$ (which is also the unique simple submodule of $\nabla_\lambda$) are objects of $\mathcal{\tilde O}$.
Even though a $\mathfrak{\tilde g}$-module $M$ in $\mathcal{\tilde O}$ does not need to admit a finite composition series, the composition multiplicities $[ M : L_\mu ]$ are still (well-defined and) finite for $\mu \in \mathfrak{\tilde h}^*$ (see Lemma 2.1.9 in \cite{KumarKacMoody}).
Now let us fix
\[ \lambda \in ( - \tilde \rho + \tilde X^+ ) \cup ( - \tilde \rho - \tilde X^+ ) \qquad \text{with} \qquad \lambda \neq - \tilde \rho \]
and write $\mathcal{\tilde O}_\lambda$ for the full subcategory of $\mathcal{\tilde O}$ whose objects are the $\mathfrak{\tilde g}$-modules $M$ in $\mathcal{\tilde O}$ such that $[ M : L_\mu ] = 0$ for all $\mu \in \mathfrak{\tilde h}^* \setminus \tilde W \Cdot \lambda$.
Furthermore, for $J \subseteq \tilde S$, we write $\mathfrak{p}_J$ for the Levi subalgebra of $\mathfrak{\tilde g}$ corresponding to $J$ and we let $\mathcal{\tilde O}^{\mathfrak{p}_J}$ (or $\mathcal{\tilde O}^{\mathfrak{p}_J}_\lambda$) be the full subcategory of $\mathcal{\tilde O}$ (or $\mathcal{\tilde O}_\lambda$) whose objects are the locally $\mathfrak{p}_J$-finite $\mathfrak{\tilde g}$-modules in $\mathcal{\tilde O}$ (or $\mathcal{\tilde O}_\lambda$).
As in the previous section, we write $\Delta^{\mathfrak{p}_J}_\mu$ for the largest locally $\mathfrak{p}_J$-finite quotient of the Verma module $\Delta_\mu$ of highest weight $\mu \in \mathfrak{\tilde h}^*$ and $\nabla^{\mathfrak{p}_J}_\mu$ for the largest locally $\mathfrak{p}_J$-finite submodule of $\nabla_\mu$, and we call these $\mathfrak{\tilde g}$-modules the \emph{generalized Verma module} and the \emph{dual generalized Verma module} of highest weight $\mu$ with respect to $\mathfrak{p}_J$.

Now let $I \subseteq \tilde S$ such that $\Stab_{\tilde W}(\lambda) = \tilde W_I$.
If $\lambda$ has positive (or negative) level then we parameterize the set of isomorphism classes of simple $\mathfrak{\tilde g}$-modules in $\mathcal{\tilde O}_\lambda$ by $X_\lambda \coloneqq \tilde W^I w_I$ (respectively $X_\lambda \coloneqq \tilde W^I$) via the map $w \mapsto L_{w\Cdot\lambda}$, and this parametrization restricts to a bijection between the set of isomorphism classes of simple $\mathfrak{\tilde g}$-modules in $\mathcal{\tilde O}_\lambda^{\mathfrak{p}_J}$ and the set $X_\lambda^J \coloneqq {}^J \tilde W^I_\mathrm{reg} w_I$ (respectively $X_\lambda^J \coloneqq w_J {}^J \tilde W^I_\mathrm{reg}$) of representatives for the regular double cosets in $\tilde W_J \backslash \tilde W / \tilde W_I$ (as defined in Appendix \ref{sec:appendix}).
In order to apply the considerations from Section \ref{sec:Koszul}, we consider truncations of the category $\mathcal{\tilde O}_\lambda^{\mathfrak{p}_J}$ as follows:
If $\lambda$ has negative level then for $w \in X_\lambda^J$, we write $\mathcal{\tilde O}^{\mathfrak{p}_J}_{\lambda,\leq w}$ for the Serre subcategory of $\mathcal{\tilde O}^{\mathfrak{p}_J}_{\lambda}$ generated by the simple $\mathfrak{\tilde g}$-modules $L_{x\Cdot\lambda}$ with $x \in X_\lambda^{J, \leq w}$, where we set
\[ X_\lambda^{J, \leq w} \coloneqq \{ y \in X_\lambda^J \mid y \leq w \} . \]
Then $\mathcal{\tilde O}^{\mathfrak{p}_J}_{\lambda,\leq w}$ is a highest weight category with weight poset $( X_\lambda^{J, \leq w} , \leq )$ by Lemma 3.7 in \cite{ShanVaragnoloVasserot}, where $\leq$ denotes the Bruhat order and the standard objects and costandard objects are given by $\Delta^{\mathfrak{p}_J}_{x\Cdot\lambda}$ and $\nabla^{\mathfrak{p}_J}_{x\Cdot\lambda}$, respectively, for $x \in X_\lambda^{J, \leq w}$.
If $\lambda$ has positive level then for $w \in X_\lambda^J$, we write $\mathcal{\tilde O}^{\mathfrak{p}_J}_{\lambda,\leq w}$ for the Serre quotient category of $\mathcal{\tilde O}^{\mathfrak{p}_J}_{\lambda}$ by the Serre subcategory generated by the simple $\mathfrak{\tilde g}$-modules $L_{x\Cdot\lambda}$ for $x \in X_\lambda^J$ with $x \nleq w$, so that the simple objects of $\mathcal{\tilde O}^{\mathfrak{p}_J}_{\lambda,\leq w}$ are naturally parameterized by the set
\[ X_\lambda^{J,\leq w} \coloneqq \{ y \in X_\lambda^J \mid y \leq w \} . \]
Then $\mathcal{\tilde O}^{\mathfrak{p}_J}_{\lambda,\leq w}$ is a highest weight category with weight poset $( X_\lambda^{J, \leq w} , \geq )$ by Proposition 3.9 in \cite{ShanVaragnoloVasserot}, where $\geq$ denotes the opposite Bruhat order and the standard objects and costandard objects are given by $\Delta^{\mathfrak{p}_J}_{x\Cdot\lambda}$ and $\nabla^{\mathfrak{p}_J}_{x\Cdot\lambda}$, respectively, for $x \in X_\lambda^{J, \leq w}$.
In either case (i.e.\ for $\lambda$ of positive or negative level), we write $P^{\mathfrak{p}_J,\leq w}_{x\Cdot\lambda}$ for the projective cover of $L_{x\Cdot\lambda}$ in $\mathcal{\tilde O}^{\mathfrak{p}_J}_{\lambda,\leq w}$, for $x \in X_\lambda^{J, \leq w}$, and we set
\[ \mathrm{P}_{\lambda,\leq w}^{\mathfrak{p}_J} \coloneqq \bigoplus_{x \in X_\lambda^{\mathfrak{p}_J,\leq w}} P_{x\Cdot\lambda}^{\mathfrak{p}_J,\leq w} \qquad \text{and} \qquad A_{\lambda,\leq w}^{\mathfrak{p}_J} = \End_{\mathcal{\tilde O}^{\mathfrak{p}_J}_{\lambda,\leq w}}\big( \mathrm{P}_{\lambda,\leq w}^{\mathfrak{p}_J} \big)^\mathrm{op} ,  \]
so that $\mathcal{\tilde O}_{\lambda,\leq w}^{\mathfrak{p}_J}$ is equivalent to $A_{\lambda,\leq w}^{\mathfrak{p}_J}\text{-}\mathrm{mod}$.

\begin{Remark}
	For $\lambda$ of negative level, the fact that all of the truncated categories $\mathcal{\tilde O}_{\lambda,\leq w}^{\mathfrak{p}_J}$ are highest weight categories implies that $\mathcal{\tilde O}_\lambda^{\mathfrak{p}_J}$ is a \emph{lower finite highest weight category} with weight poset $(X_\lambda^J,\leq)$, in the sense of Definition 3.50 in \cite{BrundanStroppel}, where as before, we write $\leq$ for the Bruhat order.
	For $\lambda$ of positive level, the category $\mathcal{\tilde O}_\lambda^{\mathfrak{p}_J}$ is an \emph{upper finite highest weight category} with weight poset $(X_\lambda^J,\geq)$ in the sense of \cite[Definition 3.34]{BrundanStroppel} because every simple $\mathfrak{\tilde g}$-module in $\mathcal{\tilde O}_\lambda^{\mathfrak{p}_J}$ has a projective cover in $\mathcal{\tilde O}_\lambda^{\mathfrak{p}_J}$ which admits a filtration by generalized Verma modules; see \cite[Section 4]{RochaCaridiWallach}.
\end{Remark}

As in the preceding section, we want to use Koszul duality as a tool to compute the dimensions of $\Ext$-groups in $\mathcal{\tilde O}_{\lambda,\leq w}^{\mathfrak{p}_J}$.
This is possible by the following result; see Theorem 3.12 in \cite{ShanVaragnoloVasserot}.

\begin{Theorem} \label{thm:KacMoodykoszuldual}
	Let $I,J \subsetneq \tilde S$ and let $\lambda \in - \tilde \rho - \tilde X^+$ and $\mu \in - \tilde \rho + \tilde X^+$ such that $\Stab_{\tilde W}(\lambda) = \tilde W_I$ and $\Stab_{\tilde W}(\mu) = \tilde W_J$.
	Then, for $w \in X_\lambda^J$ and $u \in X_\mu^I$, the algebras $A_{\lambda,\leq w}^{\mathfrak{p}_J}$ and $A_{\mu,\leq u}^{\mathfrak{p}_I}$ are standard Koszul, and their Koszul duals are given by
	\[ ( A_{\lambda,\leq w}^{\mathfrak{p}_J} )^! \cong A_{\mu,\leq w^{-1}}^{\mathfrak{p}_I} \qquad \text{and} \qquad ( A_{\mu , \leq u}^{\mathfrak{p}_I} )^! \cong A_{\lambda,\leq u^{-1}}^{\mathfrak{p}_J} . \]
\end{Theorem}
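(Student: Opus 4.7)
The plan is to adapt the approach used by Backelin in \cite{BackelinKoszulduality} for finite-dimensional category $\mathcal{O}$ to the affine setting. The key observation is that although $\mathcal{\tilde O}^{\mathfrak{p}_J}_\lambda$ has infinitely many simples, each truncation $\mathcal{\tilde O}^{\mathfrak{p}_J}_{\lambda, \leq w}$ is a highest weight category with finite weight poset, so $A_{\lambda, \leq w}^{\mathfrak{p}_J}$ is finite-dimensional, and the abstract framework from Section \ref{sec:Koszul} applies once appropriate gradings have been constructed.

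First, I would equip $A_{\lambda, \leq w}^{\mathfrak{p}_J}$ with a positive grading whose degree-zero part is semisimple. A natural source is a Soergel-type combinatorial functor realizing projective covers as images of graded modules over an affine variant of the coinvariant algebra; for negative-level affine blocks the Kashiwara-Tanisaki character formula ensures that the graded composition multiplicities of parabolic Verma modules match coefficients of parabolic Kazhdan-Lusztig polynomials for $\tilde W$, thereby forcing positivity of the grading. Given the graded lifts $\hat A_{\lambda, \leq w}^{\mathfrak{p}_J}$ and $\hat A_{\mu, \leq u}^{\mathfrak{p}_I}$, I would then identify $(A_{\lambda, \leq w}^{\mathfrak{p}_J})^!$ with $A_{\mu, \leq w^{-1}}^{\mathfrak{p}_I}$ by matching the $\Ext$-algebra of the sum of simples on one side with the endomorphism algebra of a projective generator on the other. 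The involution $w \mapsto w^{-1}$ sends $X_\lambda^{J, \leq w}$ bijectively onto $X_\mu^{I, \leq w^{-1}}$, with the symbolic exchange of $I$ and $J$ reflecting the swap between standard and costandard objects under Koszul duality; the resulting identification of indices can be read off from an inversion formula for parabolic Kazhdan-Lusztig polynomials in the affine Hecke algebra.

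Standard Koszulity is then verified via the existence of linear projective resolutions of standard objects (equivalently, linear injective resolutions of costandard objects), which should follow from a BGG-type resolution of parabolic Verma modules in the affine setting together with the graded lifting discussed above. The main obstacle, and the step that makes the affine case genuinely different from the finite case, is the asymmetric nature of the truncations: on the negative-level side $\mathcal{\tilde O}^{\mathfrak{p}_J}_{\lambda, \leq w}$ is a Serre \emph{subcategory} of $\mathcal{\tilde O}^{\mathfrak{p}_J}_\lambda$, while on the positive-level side $\mathcal{\tilde O}^{\mathfrak{p}_I}_{\mu, \leq w^{-1}}$ is a Serre \emph{quotient} of $\mathcal{\tilde O}^{\mathfrak{p}_I}_\mu$. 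One must verify that the Koszul duality functor $K$ exchanges these two types of truncation coherently: the simples of the truncated Koszul dual category must correspond to the indecomposable injectives in the Serre quotient on the opposite side, and the Bruhat bound $\leq w$ on one side must transform to $\leq w^{-1}$ on the other. This compatibility is the technical heart of the statement and is what forces the choice of positive-level data to be paired with negative-level data.
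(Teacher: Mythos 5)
This theorem is not proved in the paper at all: it is quoted verbatim from Theorem 3.12 of Shan--Varagnolo--Vasserot (\cite{ShanVaragnoloVasserot}), and the paper's entire ``proof'' is the sentence preceding the statement pointing the reader to that reference. Your proposal, by contrast, attempts to reconstruct the proof from scratch by adapting Backelin's finite-type argument, which is a genuinely different thing to be doing.

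Your outline has the right broad shape --- build a positive grading with semisimple degree zero, identify the Koszul dual by matching the $\Ext$-algebra of simples with the endomorphism ring of a projective generator, match indices via $w \mapsto w^{-1}$, and confront the Serre-subcategory-versus-Serre-quotient asymmetry between negative and positive level (the last of these is a genuinely good observation, and is indeed where the asymmetry in the hypotheses of the theorem comes from). But several load-bearing steps are only gestured at in ways that would not survive contact with the actual difficulties. The existence of a Koszul grading on the truncated algebras is not a formal consequence of the Kashiwara--Tanisaki character formula; in \cite{ShanVaragnoloVasserot} (building on Fiebig's work) it comes from a Soergel-type structure functor into (graded) modules over the structure algebra of a moment graph attached to the affine flag variety, together with localization for twisted $D$-modules at non-critical level, and establishing the grading there is a major theorem in its own right rather than an input you can assume. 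More seriously, your proposed route to \emph{standard} Koszulity via ``BGG-type resolutions of parabolic Verma modules'' runs in the wrong direction: a BGG resolution expresses a simple (or a trivial module) in terms of (parabolic) Vermas, whereas standard Koszulity asks for a \emph{linear projective resolution of each standard object} $\hat\Delta_\lambda$, i.e.\ the vanishing $\Ext^i_{\mathrm{gr}}(\hat\Delta_\lambda,\hat L_\mu\langle j\rangle)=0$ for $i\neq j$. These are dual conditions, not the same one, and the BGG resolution does not directly produce what you need. In both the finite case (\cite{MazorchukApplicationslinearcomplexes}) and the affine case (\cite{ShanVaragnoloVasserot}), standard Koszulity is obtained by quite different means. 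As written, the proposal would at best reprove the theorem you could simply have cited, and as it stands it leaves the two hardest steps (positivity of the grading, standard Koszulity) essentially unaddressed.
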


Now let us return to a weight $\lambda \in ( - \tilde \rho + \tilde X^+ ) \cup ( - \tilde \rho - \tilde X^+ )$ with $\lambda \neq - \rho$ and subsets $I,J \subsetneq \tilde S$ such that $\Stab_{\tilde W}(\lambda) = \tilde W_I$.
Let us further fix a weight $\mu \in ( - \tilde \rho + \tilde X^+ ) \cup ( - \tilde \rho - \tilde X^+ )$ with $\Stab_{\tilde W}(\mu) = \tilde W_J$ such that $\mu$ has negative (or positive) level if $\lambda$ has positive (or negative) level.
For $w \in X_\lambda^J$, we set
\[ \mathbf{L}^{\mathfrak{p}_J}_{\lambda,\leq w} \coloneqq \bigoplus_{ x \in X_\lambda^{J,\leq w} } L_{x \Cdot \lambda} , \qquad \text{so that} \qquad ( A_{\lambda,\leq w}^{\mathfrak{p}_J} )^! \cong \Big( \bigoplus_{i \geq 0} \Ext_{\mathcal{\tilde O}_{\lambda,\leq w}^{\mathfrak{p}_J}}^i\big( \mathbf{L}^{\mathfrak{p}_J}_{\lambda,\leq w} , \mathbf{L}^{\mathfrak{p}_J}_{\lambda,\leq w} \big) \Big)^\mathrm{op} . \]
For $x \in X_\lambda^{J,\leq w}$, we further write $e^{\mathfrak{p}_J,\leq w}_{\lambda,x}$ for the idempotent in $A^{\mathfrak{p}_J}_{\lambda,\leq w}$ corresponding to the canonical projection $\mathrm{P}^{\mathfrak{p}_J}_{\lambda,\leq w} \to P^{\mathfrak{p}_J,\leq w}_{x\Cdot\lambda}$ and $f^{\mathfrak{p}_J,\leq w}_{\lambda,x}$ for the idempotent in the Koszul dual $( A^{\mathfrak{p}_J}_{\lambda,\leq w} )^!$ corresponding to the canonical projection $\mathrm{L}^{\mathfrak{p}_J}_{\lambda,\leq w} \to L_{x\Cdot\lambda}$.
Then, again by Theorem 3.12 in \cite{ShanVaragnoloVasserot}, the isomorphism
\[ A_{\mu,\leq w^{-1}}^{\mathfrak{p}_I} \xrightarrow{~\sim~} ( A_{\lambda,\leq w}^{\mathfrak{p}_J} )^! \]
from Theorem \ref{thm:KacMoodykoszuldual} can be chosen in such a way that it maps $e^{\mathfrak{p}_I,\leq w^{-1}}_{\mu,x^{-1}}$ to $f^{\mathfrak{p}_J,\leq w}_{\lambda,x}$ for all $x \in X_\lambda^J$.
In particular, if we write $L^{\mathfrak{p}_J,\leq w,!}_{\lambda,x}$ for the simple $( A^{\mathfrak{p}_J}_{\lambda,\leq w} )^!$-module corresponding to $x \in X_\lambda^{J,\leq w}$ (as described in Section \ref{sec:Koszul}) then the functor
\[ ( \mathcal{\tilde O}^{\mathfrak{p}_J}_{\lambda,\leq w} )^! \coloneqq ( A^{\mathfrak{p}_J}_{\lambda,\leq w} )^!\text{-}\mathrm{mod} \longrightarrow A^{\mathfrak{p}_I}_{\mu,\leq w^{-1}} \text{-} \mathrm{mod} \simeq \mathcal{\tilde O}^{\mathfrak{p}_I}_{\mu,\leq w^{-1}} \]
induced by the aforementioned isomorphism sends the simple $( A^{\mathfrak{p}_J}_{\lambda,\leq w} )^!$-module $L^{\mathfrak{p}_J,\leq w,!}_{\lambda,x}$ to the simple $\mathfrak{\tilde g}$-module $L_{x^{-1} \Cdot \mu}$.
We can now prove the main result of this section; it is an analogue of Theorem \ref{thm:extcategoryO} in the setting of affine Kac-Moody algebras.

\begin{Theorem} \label{thm:extKacMoody}
	Let $\lambda \in ( - \tilde \rho + \tilde X^+ ) \cup ( - \tilde \rho - \tilde X^+ )$ and $I,J \subsetneq \tilde S$ such that $\Stab_{\tilde W}(\lambda) = \tilde W_I$.
	If $\lambda$ has negative level then we have
	\[ \sum_{i \geq 0} \dim \Ext_{\mathcal{\tilde O}^{\mathfrak{p}_J}}^i( L_{y\Cdot\lambda} , \nabla^{\mathfrak{p}_J}_{x\Cdot\lambda} ) \cdot v^i = n^I_{x^{-1},y^{-1}} \]
	for all $x,y \in X_\lambda^J$, and if $\lambda$ has positive level then we have
	\[ \sum_{i \geq 0} \dim \Ext_{\mathcal{\tilde O}^{\mathfrak{p}_J}}^i( L_{y\Cdot\lambda} , \nabla^{\mathfrak{p}_J}_{x\Cdot\lambda} ) \cdot v^i = m_I^{w_Ix^{-1},w_Iy^{-1}} \]
	for all $x,y \in X_\lambda^J$.
\end{Theorem}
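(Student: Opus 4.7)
\medskip

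The plan is to follow the template of Theorem \ref{thm:extcategoryO}: use standard Koszulity (now in the guise of Theorem \ref{thm:KacMoodykoszuldual}) to reinterpret dimensions of $\Ext$-groups on one side as graded composition multiplicities on the Koszul dual side, and then use graded lifts of translation functors to reduce singular weights to regular weights, where the graded multiplicities are known parabolic Kazhdan--Lusztig polynomials.

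First I would fix a weight $\mu \in (-\tilde\rho + \tilde X^+) \cup (-\tilde\rho - \tilde X^+)$ of opposite-sign level to $\lambda$ with $\Stab_{\tilde W}(\mu) = \tilde W_J$, and choose a truncation parameter $w \in X_\lambda^J$ large enough so that $x$ and $y$ both lie in $X_\lambda^{J,\leq w}$. Because dual generalized Verma modules are costandard objects and $L_{y\Cdot\lambda}$ admits a projective resolution inside the highest weight category $\mathcal{\tilde O}^{\mathfrak{p}_J}_{\lambda,\leq w}$, the $\Ext$-group in question is unchanged by passing to this truncation. Setting
\[ p^{\mathfrak{p}_J}_{\lambda,\leq w,x,y} = \sum_{i\geq 0} \dim \Ext^i_{\mathcal{\tilde O}^{\mathfrak{p}_J}_{\lambda,\leq w}}\big(L_{y\Cdot\lambda},\nabla^{\mathfrak{p}_J}_{x\Cdot\lambda}\big) v^i, \qquad q^{\mathfrak{p}_I}_{\mu,\leq w^{-1},a,b} = \sum_{i \in \Z} [\hat\Delta^{\mathfrak{p}_I,\leq w^{-1}}_{a\Cdot\mu} : \hat L^{\mathfrak{p}_I,\leq w^{-1}}_{b\Cdot\mu}\langle i\rangle] v^i, \]
Theorem \ref{thm:KacMoodykoszuldual}, Lemma \ref{lem:KoszuldualKLpolynomials} and the explicit idempotent correspondence $e^{\mathfrak{p}_I,\leq w^{-1}}_{\mu,z^{-1}} \leftrightarrow f^{\mathfrak{p}_J,\leq w}_{\lambda,z}$ recorded before the theorem together give $p^{\mathfrak{p}_J}_{\lambda,\leq w,x,y} = q^{\mathfrak{p}_I}_{\mu,\leq w^{-1},x^{-1},y^{-1}}$.

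Next I would construct graded versions of the translation functor between blocks of $\mathcal{\tilde O}^{\mathfrak{p}_I}$ for singular $\mu$ and a regular weight $\mu^\prime$ of the same level, in the same spirit as the functors $\hat T^\lambda_{\lambda^\prime}$ used in the proof of Theorem \ref{thm:extcategoryO}. Here I would rely on the fact (established in Section 3 of \cite{ShanVaragnoloVasserot}) that the truncated blocks carry compatible Koszul gradings inherited as quotients of ungraded envelopes by homogeneous ideals, and on the standard behavior of translation functors on (parabolic) Verma modules. Exactness of the graded translation functor implies $q^{\mathfrak{p}_I}_{\mu,\leq w^{-1},x^{-1},y^{-1}} = q^{\mathfrak{p}_I}_{\mu^\prime,\leq w^{-1},x^{-1},y^{-1}}$, and for the regular weight $\mu^\prime$ the right-hand side identifies with a parabolic Kazhdan--Lusztig polynomial of the appropriate (spherical or anti-spherical, inverse) flavor, depending on the sign of the level of $\mu^\prime$, as in \cite{ShanVaragnoloVasserot} and the regular-case results of \cite{BeilinsonGinzburgSoergel}.

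The final and hardest step is to convert the resulting polynomial into the shape stated in the theorem. Unlike the semisimple case, there is no longest element of $\tilde W$ allowing the clean substitution $w \mapsto w^{-1} w_0$ that turned spherical into anti-spherical data; instead, after Koszul duality one is left with a parabolic KL polynomial indexed by the \emph{opposite} parabolic subgroup (encoding the original singularity rather than the original parabolic), and one must convert back. This is precisely where the double parabolic inversion formula from Appendix \ref{sec:appendix} enters: applying it to the identity in Step~4 converts an inverse anti-spherical polynomial for the Koszul-dual parabolic $\tilde W_J$ into $n^I_{x^{-1},y^{-1}}$ in the negative-level case, and converts an inverse spherical polynomial into $m_I^{w_I x^{-1}, w_I y^{-1}}$ in the positive-level case. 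The main obstacle I anticipate is the careful bookkeeping of the two different parametrizations $X_\lambda^J = {}^J\tilde W^I_\mathrm{reg} w_I$ and $X_\lambda^J = w_J {}^J\tilde W^I_\mathrm{reg}$, which force the level-dependent split in the theorem and dictate exactly which inversion identity is invoked, and the verification that all graded-translation statements remain valid for the \emph{parabolic} dual Verma modules in the Kac--Moody setting.
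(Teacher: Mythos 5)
Your proposal follows the same broad blueprint (truncation, Koszul duality, graded translation, an inversion identity) but rearranges the steps in a way that creates a real difficulty, and the role you assign to the double parabolic inversion formula does not match how it actually gets used.

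The crucial difference is the side on which you translate. You Koszul-dualize first, turning $p^{\mathfrak{p}_J}_{\lambda,\leq w,x,y}$ into $q^{\mathfrak{p}_I}_{\mu,\leq w^{-1},x^{-1},y^{-1}}$, and then translate on the Koszul-dual ($\mu$) side from singular $\mu$ to a regular $\mu'$ \emph{of the same level as $\mu$}. Since $\mu$ has the opposite level to $\lambda$, this means your base case when $\lambda$ has negative level is the graded decomposition multiplicity $q^{\mathfrak{p}_I}_{\mu'}$ for a \emph{regular positive level} weight. That quantity is not directly delivered by the Kashiwara--Tanisaki negative-level character formula (the only regular affine input the paper actually uses), and if you try to obtain it by Koszul duality you land back on $p^{\mathfrak{b}}_{\lambda'}$ for a \emph{singular negative-level} weight $\lambda'$ with $\Stab_{\tilde W}(\lambda')=\tilde W_I$ — i.e. precisely the $J=\varnothing$ instance of the theorem you are trying to prove. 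So as stated the argument is either circular or requires a separate positive-level regular input that you have not supplied. The paper avoids this by never leaving negative level until the very last step: it computes $p^{\mathfrak{p}_J}_{\nu,z,x}=m^J_{w_Jz,w_Jx}$ for a regular negative-level $\nu$, passes to $q^{\mathfrak{p}_J}_{\nu}$ by the abstract inversion of Lemma \ref{lem:abstractinversionformula}, transports this to $q^{\mathfrak{p}_J}_{\lambda}$ by the graded translation functor $\hat T_\nu^\lambda$ (regular $\to$ singular, both negative level), and only then inverts.

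This also explains why your use of the double parabolic inversion formula is misdescribed. In the paper's argument Theorem \ref{thm:inversionformuladoublecosets} is used precisely to invert the matrix $(q^{\mathfrak{p}_J}_{\lambda,z,y})=(m_J^{w_Jz,w_Jy})$ back to $(p^{\mathfrak{p}_J}_{\lambda,z,x})=(n^I_{z^{-1},x^{-1}})$ via Lemma \ref{lem:abstractinversionformula} — i.e.\ as an orthogonality relation between $m_J$ and $n^I$ over the regular double cosets, not as a device to "convert" one polynomial into another. In your outline you have already written $p^{\mathfrak{p}_J}_\lambda = q^{\mathfrak{p}_I}_{\mu'}$ by Koszul duality, so once $q^{\mathfrak{p}_I}_{\mu'}$ were identified there would be nothing left for the inversion formula to do; and the claim that the relevant polynomial is an "inverse anti-spherical polynomial for the Koszul-dual parabolic $\tilde W_J$" is a mislabelling — the parabolic in $\mathcal{\tilde O}^{\mathfrak{p}_I}_{\mu'}$ is $\tilde W_I$, not $\tilde W_J$. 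The genuine role of the inversion Lemma and of Theorem \ref{thm:inversionformuladoublecosets}, and the decision to translate on the $\lambda$ side where the Kashiwara--Tanisaki input is available, are exactly the points your sketch needs to fix. The positive-level half of the theorem is then deduced at the end by a single application of \eqref{eq:KoszuldualKLpolynomialsKacMoody} and relabelling, not by a separate translation argument.
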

\begin{proof}
	Let us start by observing that by Theorems 3.42 and 3.59 in \cite{BrundanStroppel}, we have
	\[ \Ext_{\mathcal{\tilde O}^{\mathfrak{p}_J}}^i( L_{y\Cdot\lambda} , \nabla^{\mathfrak{p}_J}_{x\Cdot\lambda} ) \cong \Ext_{\mathcal{\tilde O}_{\lambda,\leq w}^{\mathfrak{p}_J}}^i( L_{y\Cdot\lambda} , \nabla^{\mathfrak{p}_J}_{x\Cdot\lambda} ) \]
	for all $i \geq 0$ and $w \in X_\lambda^J$ such that $x,y \leq w$; hence it suffices to prove the formulas in the theorem for the highest weight category $\mathcal{\tilde O}_{\lambda,\leq w}^{\mathfrak{p}_J}$.
	
	Next, for $x \in X_\lambda^{J,\leq w}$, let us write $\hat L^{\mathfrak{p}_J,\leq w}_{x\Cdot\lambda}$ and $\hat \Delta^{\mathfrak{p}_J,\leq w}_{x\Cdot\lambda}$ for the canonical graded lifts of $L_{x\Cdot\lambda}$ and $\Delta^{\mathfrak{p}_J}_{x\Cdot\lambda}$, respectively, in the category $\mathcal{\tilde O}_{\lambda,\leq w,\mathrm{gr}}^{\mathfrak{p}_J}$ of graded modules over a Koszul graded version $\hat A^{\mathfrak{p}_J}_{\lambda,\leq w}$ of $A^{\mathfrak{p}_J}_{\lambda,\leq w}$.
	Then, according to Lemma \ref{lem:abstractinversionformula}, the polynomials
	\[ p^{\mathfrak{p}_J,\leq w}_{\lambda,z,x} = \sum_{i \geq 0} \dim \Ext_{\mathcal{\tilde O}^{\mathfrak{p}_J}}^i( L_{x\Cdot\lambda} , \nabla^{\mathfrak{p}_J}_{z\Cdot \lambda} ) \cdot v^i \qquad \text{and} \qquad q^{\mathfrak{p}_J,\leq w}_{\lambda,z,y} = \sum_{i \geq 0} [ \hat \Delta^{\mathfrak{p}_J,\leq w}_{z\Cdot \lambda} : \hat L^{\mathfrak{p}_J,\leq w}_{y\Cdot\lambda} \langle i \rangle ] \cdot v^i \]
	with $x,y,z \in X_\lambda^{J,\leq w}$ satisfy the inversion formula
	\begin{equation} \label{eq:inversionKacMoody}
	\sum_{z \in X_\lambda^{J,\leq w}} p^{\mathfrak{p}_J, \leq w}_{\lambda,z,x}(-v) \cdot q^{\mathfrak{p}_J,\leq w}_{\lambda,z,y} = \delta_{x,y} ,
	\end{equation}
	and for $\mu \in ( - \tilde \rho + \tilde X^+ ) \cup ( - \tilde \rho - \tilde X^+ )$ with $\Stab_{\tilde W}(\mu) = \tilde W_J$ and such that $\mu$ has negative (or positive) level if $\lambda$ has positive (or negative) level, we have
	\begin{equation} \label{eq:KoszuldualKLpolynomialsKacMoody}
	p^{\mathfrak{p}_J,\leq w}_{\lambda,z,x} = q^{\mathfrak{p}_I,\leq w^{-1}}_{\mu,z^{-1},x^{-1}} \qquad \text{and} \qquad q^{\mathfrak{p}_J,\leq w}_{\lambda,z,y} = p^{\mathfrak{p}_I , \leq w^{-1}}_{\mu,z^{-1},y^{-1}}
	\end{equation}
	by Lemma \ref{lem:KoszuldualKLpolynomials} and the above discussion of Koszul duality for $\mathcal{\tilde O}^{\mathfrak{p}_J}_{\lambda,\leq w}$.
	
	As in the proof of Theorem \ref{thm:extcategoryO}, we first consider a special case of the formulas in the theorem, which we will then use to prove the general statement:
	Let $\nu \in - \tilde \rho - \tilde X^+$ with $\nu \neq - \tilde \rho$ and $\Stab_{\tilde W}(\nu) = \{ e \}$, so that $L_{w_J\Cdot\nu} = \Delta^{\mathfrak{p}_J}_{w_J\Cdot\nu}$ and
	\[ \sum_{i \geq 0} \dim \Ext_{ \mathcal{\tilde O}^{\mathfrak{p}_J} }^i( L_{w_J\Cdot\nu} , \nabla^{\mathfrak{p}_J}_{x\Cdot\nu} ) \cdot v^i = \delta_{x,w_J} = m^J_{w_Jx,e} \]
	for all $x \in X_\nu^J$ by Theorem 3.56 in \cite{BrundanStroppel}.
	
	Now one can show by induction on the length of $y \in X_\nu^J$ that the validity of the Kazhdan-Lusztig character formula in $\mathcal{\tilde O}_\nu$ (as conjectured in this case by G.\ Lusztig \cite{LusztigOnQuantumGroups} and proven by M.\ Kashiwara and T.\ Tanisaki \cite{KashiwaraTanisakiNegativeLevel}) and of its parabolic analogue in $\mathcal{\tilde O}_\nu^{\mathfrak{p}_J}$ (see Section 3.3 in \cite{LaniniRamSobajeFockSpace}) implies that
	\[ \sum_{i \geq 0} \dim \Ext_{\mathcal{\tilde O}}^i( L_{y\Cdot\nu} , \nabla^{\mathfrak{p}_J}_{x\Cdot\nu} ) \cdot v^i = m^J_{w_Jx,w_Jy} \]
	for all $x,y \in X_\nu^J$, exactly as in the proof of Proposition C.2 in \cite{Jantzen}.%
	\footnote{In order to replicate the proof of Proposition C.2 in \cite{Jantzen} in the category $\mathcal{\tilde O}_\nu^{\mathfrak{p}_J}$, one needs to use translation functors between $\mathcal{\tilde O}_\nu^{\mathfrak{p}_J}$ and $\mathcal{\tilde O}_\gamma^{\mathfrak{p}_J}$, where $\Stab_{\tilde W}(\gamma)$ is generated by a single simple reflection.
	These translation functors exist (and behave like they do in the setting of \cite{Jantzen}) by Section 3 in \cite{KashiwaraTanisakiNonCritical}; see also Proposition 4.36 in \cite{ShanVaragnoloVasserot}.}
	This implies that
	\[ p^{\mathfrak{p}_J,\leq w}_{\nu,z,x} = m^J_{w_Jz,w_Jx} \]
	for all $w \in X_\nu^J$ and $x,y \in X_\nu^{J,\leq w}$, and by equations \eqref{eq:inversionKacMoody} and \eqref{eq:inversionformulaparabolic}, we further have
	\[ q^{\mathfrak{p}_J,\leq w}_{\nu,x,y} = m_J^{w_Jx,w_Jy} . \]
	Now suppose that $\lambda$ has negative level.
	By Lemma 5.10 in \cite{ShanVaragnoloVasserot} (see also Remark 4.2 in \cite{KoCohomology}), the ordinary translation functor
	\[ T_\nu^\lambda \colon \mathcal{\tilde O}_{\nu,\leq w} \longrightarrow \mathcal{\tilde O}_{\lambda,\leq w} \]
	admits a graded lift
	\[ \hat T_\nu^\lambda \colon \mathcal{\tilde O}_{\nu,\leq w,\mathrm{gr}} \longrightarrow \mathcal{\tilde O}_{\lambda,\leq w,\mathrm{gr}} \]
	for all $w \in X_\lambda$,
	and the latter restricts to a functor from $\mathcal{\tilde O}_{\nu,\leq w,\mathrm{gr}}^{\mathfrak{p}_J}$ to $\mathcal{\tilde O}_{\lambda,\leq w,\mathrm{gr}}^{\mathfrak{p}_J}$ for $w \in X_\lambda^J$, as in the proof of Theorem \ref{thm:extcategoryO}.
	Still as in the proof of Theorem \ref{thm:extcategoryO}, we conclude that
	\[ q^{\mathfrak{p}_J,\leq w}_{\lambda,z,y} = q^{\mathfrak{p}_J,\leq w}_{\nu,z,y} = m_J^{w_Jz,w_Jy} \]
	for all $w \in X_\lambda^J$ and $y,z \in X_\lambda^{J,\leq w}$, and using the inversion formulas from \eqref{eq:inversionKacMoody} and Theorem \ref{thm:inversionformuladoublecosets}, it follows that
	\[ p^{\mathfrak{p}_J,\leq w}_{\lambda,z,x} = n^I_{z^{-1},x^{-1}} \]
	for all $w \in X_\lambda^J$ and $x,z \in X_\lambda^{J,\leq w}$, matching the first formula in the theorem.
	Furthermore, for a weight $\mu \in - \tilde \rho + \tilde X^+$ with $\mu \neq - \tilde \rho$ and $\Stab_{\tilde W}(\mu) = \tilde W_J$, equation \eqref{eq:KoszuldualKLpolynomialsKacMoody} yields
	\[ q^{\mathfrak{p}_I,\leq w}_{\mu,z,x} = p^{\mathfrak{p}_J,\leq w^{-1}}_{\lambda,z^{-1},x^{-1}} = n^I_{z,x} \qquad \text{and} \qquad p^{\mathfrak{p}_I,\leq w}_{\mu,z,y} = q^{\mathfrak{p}_J,\leq w^{-1}}_{\lambda,z^{-1},y^{-1}} = m_J^{w_Jz^{-1},w_Jy^{-1}} \]
	for all $w \in X_\mu^I$ and $x,y,z \leq X_\mu^{I,\leq w}$, and by relabeling, we obtain the second equation in the theorem.
\end{proof}

\begin{Remark}
	The first formula in Theorem \ref{thm:extKacMoody} is also obtained (using different methods) in the proof of Theorem 4.10 in \cite{KoCohomology}.
\end{Remark}

\appendix

\section{Coxeter groups and Kazhdan-Lusztig polynomials} \label{sec:appendix}

In this appendix, we summarize some important results about Kazhdan-Lusztig type combinatorics, and we prove the `double parabolic inversion formula' which was used in the proof of Theorem \ref{thm:extKacMoody}.
We follow the conventions of \cite{SoergelKL} and refer the reader to that article for more details and further references.
Let $W$ be a Coxeter group with a finite set of simple reflections $S \subseteq W$, so $W$ has a presentation of the form
\[ W = \langle s \in S \mid s^2 = e , (st)^{m_{st}} = e \text{ for } s,t \in S \rangle \]
for certain $m_{st} \in \{ 2,3,4,\ldots \} \cup \{ \infty \}$, where by convention, the relation $(st)^{m_{st}} = e$ is void if $m_{st} = \infty$.
We write $\ell \colon W \to \Z_{\geq 0}$ for the length function and $\leq$ for the Bruhat order on $W$.
For a subset $I \subseteq S$, we call $W_I \coloneqq \langle I \rangle$ the parabolic subgroup of $W$ corresponding to $I$.
For any $x \in W$, the coset $x W_I$ contains a unique element of minimal length, and the latter is also minimal in $x W_I$ with respect to the Bruhat order.
We write $W^I$ for the set of elements $x \in W$ that have minimal length in the coset $W^I$ and remark that for $x \in W^I$ and $w \in W_I$, we have $\ell(xw) = \ell(x) + \ell(w)$.
Analogously, we write $\prescript{I}{}{W}$ for the set of elements $y \in W$ that have minimal length in the coset $W_I y$ and remark that for $y \in \prescript{I}{}{W}$ and $w \in W_I$, we have $\ell(wy) = \ell(w) + \ell(y)$.
If $W_I$ is finite then we write $w_I$ for the longest element of $W_I$.
Then $W^I w_I$ is precisely the set of elements $x \in W$ that have maximal length in the coset $x W_I$ and $w_I \prescript{I}{}{W}$ is the set of elements $y \in W$ that have maximal length in the coset $W_I y$.

The \emph{Hecke algebra} of $W$ is the associative $\Z[v^{\pm 1}]$-algebra $\mathcal{H} = \mathcal{H}_W$ with generators $H_s$ for $s \in S$, subject to the \emph{quadratic relations}
\[ ( H_s + v )( H_s - v^{-1} ) = 0 \]
and the \emph{braid relations}
\[ \underbrace{ H_s H_t \cdots }_{m_{st}\text{ factors}} = \underbrace{ H_t H_s \cdots }_{m_{st}\text{ factors}} \]
for $s,t \in S$.
Using the quadratic relations, it is straightforward to see that there are two $\mathcal{H}$-modules \[ \mathrm{triv}_W = \Z[v^{\pm 1}] \qquad \text{and} \qquad \sign_W = \Z[v^{\pm 1}] \]
of rank one
such that $H_s$ acts on $\mathrm{triv}_W$ by multiplication with $v^{-1}$ and on $\sign_W$ by multiplication with $-v$ for all $s \in S$,
and using the braid relations, one can show that for $w \in W$ with a reduced expression $w = s_1 \cdots s_m$, the element $H_w \coloneqq H_{s_1} \cdots H_{s_m} \in \mathcal{H}$ is independent of the choice of reduced expression for $w$.
The elements $H_w$ with $w \in W$ are invertible and form a $\Z[v^{\pm1}]$-basis of $\mathcal{H}$.
We define the \emph{bar involution} of $\mathcal{H}$ to be the unique ring homomorphism $\overline{\phantom{A}} \colon \mathcal{H} \to \mathcal{H}$ with $\overline{v} = v^{-1}$ and $\overline{H_x} = H_{x^{-1}}^{-1}$, and we call an element $H \in \mathcal{H}$ \emph{self-dual} if $\overline{H} = H$.
For any $x \in W$, there exists a unique self-dual element $\underline{H}_x \in \mathcal{H}$ such that $\underline{H}_x \in H_x + \sum_{y<x} v \Z[v] \cdot H_y$, and the elements $\underline{H}_x$ with $x \in W$ form the \emph{Kazhdan-Lusztig basis} of $\mathcal{H}$.
The \emph{Kazhdan-Lusztig polynomial} $h_{y,x}$ corresponding to two elements $x,y \in W$ is defined by the equality
\[ \underline{H}_x = \sum_{y \in W} h_{y,x} \cdot H_y \]
and the \emph{inverse Kazhdan-Lusztig polynomial} $h^{y,x}$ is defined by
\[ H_y = \sum_{x \in W} (-1)^{\ell(y) + \ell(x)} \cdot h^{y,x} \cdot \underline{H}_x . \]

Now let us fix a subset $I \subseteq S$ and consider the \emph{spherical $\mathcal{H}$-module}
\[ \mathcal{M} = \mathcal{M}_I \coloneqq \mathrm{triv}_{W_I} \otimes_{\mathcal{H}_{W_I}} \mathcal{H} \]
and the \emph{anti-spherical $\mathcal{H}$-module}
\[ \mathcal{N} = \mathcal{N}_I \coloneqq \sign_{W_I} \otimes_{\mathcal{H}_{W_I}} \mathcal{H} . \]
The elements $M_x = 1 \otimes H_x \in \mathcal{M}$ and $N_x = 1 \otimes H_x \in \mathcal{N}$ for $x \in \prescript{I}{}{W}$ form bases of $\mathcal{M}$ and $\mathcal{N}$, respectively, and the bar involution on $\mathcal{H}$ induces involutions $\overline{\phantom{A}} \colon \mathcal{M} \to \mathcal{M}$ and $\overline{\phantom{A}} \colon \mathcal{N} \to \mathcal{N}$.
For any $x \in \prescript{I}{}{W}$, there is a unique self-dual element $\underline{M}_x \in \mathcal{M}$ and a unique self-dual element $\underline{N}_x \in \mathcal{N}$ with
\[ \underline{M}_x \in M_x + \sum_{y < x} v\Z[v] \cdot M_y \qquad \text{and} \qquad \underline{N}_x \in N_x + \sum_{y < x} v\Z[v] \cdot N_y , \]
and the elements $\underline{M}_x$ and $\underline{N}_x$ with $x \in \prescript{I}{}{W}$ form the \emph{Kazhdan-Lusztig bases} of $\mathcal{M}$ and $\mathcal{N}$, respectively.
For later use, we note that the right $\mathcal{H}$-module homomorphism
\[ \psi \colon \mathcal{H} \longrightarrow \mathcal{N} , \qquad H \longmapsto 1 \otimes H \]
satisfies $\psi(\underline{H}_x) = \underline{N}_x$ for all $x \in \prescript{I}{}{W}$ and $\psi( \underline{H}_x ) = 0$ for $x \in W \setminus \prescript{I}{}{W}$; see the proof of Proposition 3.4 in \cite{SoergelKL}.
The \emph{spherical Kazhdan-Lusztig polynomial} $m_{y,x}$ and the \emph{anti-spherical Kazhdan-Lusztig polynomial} $n_{y,x}$ corresponding to $x,y \in \prescript{I}{}{W}$ are defined by the equalities
\[ \underline{M}_x = \sum_{y \in \prescript{I}{}{W}} m_{y,x} \cdot M_y \qquad \text{and} \qquad \underline{N}_x = \sum_{y \in \prescript{I}{}{W}} n_{y,x} \cdot N_y , \]
and as before, we define the \emph{inverse spherical Kazhdan-Lusztig polynomial} $m^{y,x}$ and the \emph{inverse anti-spherical Kazhdan-Lusztig polynomial} $n^{y,x}$ by
\[ M_y = \sum_{x \in \prescript{I}{}{W}} (-1)^{\ell(y) + \ell(x)} \cdot m^{y,x} \cdot \underline{M}_x \qquad \text{and} \qquad N_y = \sum_{x \in \prescript{I}{}{W}} (-1)^{\ell(y) + \ell(x)} \cdot n^{y,x} \cdot \underline{N}_x . \]
As a consequence, we have the inversion formulas
\begin{equation} \label{eq:inversionformulaparabolic}
\sum_{z \in \prescript{I}{}{W}} (-1)^{\ell(z) + \ell(x)} \cdot m^{z,x} \cdot m_{z,y} = \delta_{x,y} \qquad \text{and} \qquad \sum_{z \in \prescript{I}{}{W}} (-1)^{\ell(z) + \ell(x)} \cdot n^{z,x} \cdot n_{z,y} = \delta_{x,y}
\end{equation}
for all $x,y \in \prescript{I}{}{W}$.
When the choice of $I \subseteq S$ is not clear from the context, we occasionally write
\[ m_{x,y} = m^I_{x,y} , \qquad n_{x,y} = n^I_{x,y} , \qquad m^{y,x} = m_I^{y,x} , \qquad n^{y,x} = n_I^{y,x} . \]
Now for $I \subseteq S$ such that $W_I$ is finite, consider the element
\[ 1_I \coloneqq \underline{H}_{w_I} = \sum_{w \in W_I} v^{\ell(w_I) - \ell(w)} \cdot H_w \]
and observe that we have
$H_s \cdot 1_I = 1_I \cdot H_s = v^{-1} \cdot 1_I$ for all $s \in I$; see Proposition 2.9 in \cite{SoergelKL}.
By the proof of Proposition 3.4 in \cite{SoergelKL}, the $\Z[v^{\pm 1}]$-linear map $\varphi \colon \mathcal{M} \to \mathcal{H}$ with $M_x \mapsto 1_I \cdot H_x$ for all $x \in \prescript{I}{}{W}$ is a homomorphism of right $\mathcal{H}$-modules with $\varphi(\underline{M}_x) = \underline{H}_{w_I x}$ for all $x \in \prescript{I}{}{W}$, and it induces an isomorphism between the right $\mathcal{H}$-modules $\mathcal{M}$ and $1_I \cdot \mathcal{H}$.
In particular, we have
\[ 1_I \cdot H_y = \sum_{z \in \prescript{I}{}{W}} (-1)^{\ell(y)+\ell(z)} \cdot m^{y,z} \cdot \underline{H}_{w_I z} \]
for all $y \in \prescript{I}{}{W}$.
Using the anti-involution $i$ from the proof of \cite[Theorem 2.7]{SoergelKL}, we further obtain
\[ H_y \cdot 1_I = \sum_{z \in W^I} (-1)^{\ell(y)+\ell(z)} \cdot m^{y^{-1},z^{-1}} \cdot \underline{H}_{z w_I} \]
and $\underline{H}_{xw_I} \in \mathcal{H} \cdot 1_I$ for all $x,y \in W^I$.
These observations will be useful later on.

In the following, we want to combine the `spherical' and the `anti-spherical' type Kazhdan-Lusztig combinatorics for two different parabolic subgroups of $W$ corresponding to subsets $I , J \subseteq S$ such that $W_I$ is finite.
To that end, we consider the $\Z[v^{\pm 1}]$-submodule $\mathcal{N}_J \cdot 1_I$ of $\mathcal{N}_J$, which by the above discussion can be considered as a generalization of both the anti-spherical $\mathcal{H}$-module $\mathcal{N}_J$ and the spherical $\mathcal{H}$-module $\mathcal{M}_I \cong 1_I \cdot \mathcal{H}$ (or strictly speaking, the analogous left $\mathcal{H}$-module $\mathcal{H} \cdot 1_I$).
This approach draws some inspiration from Subsection 2.2 in \cite{LaniniRamSobajeFockSpace}.
We first need to discuss some properties of parabolic double cosets.

Let us fix two subsets $I , J \subseteq S$.
The properties of the double cosets in $W_J \backslash W / W_I$ that we list in this paragraph follow from Exercise §1.3 in \cite[Chapter IV]{Bourbaki}; see also \cite[Subsection 2.2]{ParabolicDoubleCosets}.
For every element $x \in W$, the double coset $W_J x W_I$ has a unique element of minimal length, and the latter is also minimal in $W_J x W_I$ with respect to the Bruhat order.
The set of elements of $W$ that have minimal length in their $(W_J,W_I)$ double coset is precisely
\[ \prescript{J}{}{W}^I \coloneqq \prescript{J}{}{W} \cap W^I ; \]
hence $\prescript{J}{}{W}^I$ is a set of $(W_J,W_I)$ double coset representatives.
For $z \in \prescript{J}{}{W}^I$ and $H \coloneqq J \cap z I z^{-1}$, an element $x \in W_J$ satisfies $xz \in W^I$ if and only if $x \in W_J^H$, and the map $W_J^H \times W_I \to W_J z W_I$ with $(x,y) \mapsto xzy$ is a bijection with $\ell(xzy) = \ell(x) + \ell(z) + \ell(y)$.
Analogously, for $H^\prime \coloneqq z^{-1} J z \cap I$, an element $y \in W_I$ satisfies $zy \in \prescript{J}{}{W}$ if and only if $y \in \prescript{H^\prime}{}{W}_I$, and the map $W_J \times \prescript{H^\prime}{}{W}_I$ with $(x,y) \mapsto xzy$ is a bijection with $\ell(xzy) = \ell(x) + \ell(z) + \ell(y)$.
We say that the double coset $W_J z W_I$ (with $z \in \prescript{J}{}{W}^I$) is \emph{regular} if $J \cap z I z^{-1} = \varnothing$,
and we write
\[ \prescript{J}{}{W}^I_\mathrm{reg} = \{ z \in \prescript{J}{}{W}^I \mid W_J z W_I \text{ is regular} \} \]
for the set of minimal length representatives of the regular $(W_J,W_I)$-double cosets.

Now let $I,J \subseteq S$ such that $W_I$ is finite and consider the $\Z[v^{\pm 1}]$-submodule $\mathcal{N}_J \cdot 1_I$ of $\mathcal{N}_J$.
Observe that for $x \in \prescript{J}{}{W}^I$, $y \in W_J$ and $z \in W_I$ such that $\ell(yxz) = \ell(y) + \ell(x) + \ell(z)$, we have
\[ N_e \cdot H_{yxz} \cdot 1_I = N_e \cdot H_y H_x H_z \cdot 1_I = (-v)^{\ell(y)} \cdot v^{-\ell(z)} \cdot N_e \cdot H_x \cdot 1_I = (-1)^{\ell(y)} \cdot v^{\ell(y)-\ell(z)} \cdot N_x \cdot 1_I , \]
whence $\mathcal{N}_J \cdot 1_I$ is spanned over $\Z[v^{\pm 1}]$ by the elements $N_x \cdot 1_I$ with $x \in \prescript{J}{}{W}^I$.
If the double coset $W_J x W_I$ is non-regular then we can choose $s \in J \cap x I x^{-1}$ and $t = x^{-1} s x \in I$, and we compute
\begin{multline*}
	\qquad - v \cdot N_x \cdot 1_I = - v \cdot N_e \cdot H_x \cdot 1_I = N_e \cdot H_s \cdot H_x \cdot 1_I = N_e \cdot H_{sx} \cdot 1_I = N_e \cdot H_{xt} \cdot 1_I \\ = N_e \cdot H_x \cdot H_t \cdot 1_I = N_x \cdot H_t \cdot 1_I = v^{-1} \cdot N_x \cdot 1_I . \qquad 
\end{multline*}
Since $\mathcal{N}_J$ is free over $\Z[v^{\pm 1}]$, this implies that $N_x \cdot 1_I = 0$ and that $\mathcal{N}_J \cdot 1_I$ is spanned over $\Z[v^{\pm 1}]$ by the elements $N_x \cdot 1_I$ with $x \in \prescript{J}{}{W}^I_\mathrm{reg}$.
For $x \in \prescript{J}{}{W}^I_\mathrm{reg}$, we have
\[ N_x \cdot 1_I = \sum_{w \in W_I} v^{\ell(w_I) - \ell(w)} \cdot N_x \cdot H_w = \sum_{w \in W_I} v^{\ell(w_I) - \ell(w)} \cdot N_{xw} \]
because $xw \in \prescript{J}{}{W}$ for all $x \in W_I$.
Since the elements $N_x$ with $x \in \prescript{J}{}{W}$ form a $\Z[v^{\pm 1}]$-basis of $\mathcal{N}_J$, we conclude that the elements $N_x \cdot 1_I$ with $x \in \prescript{J}{}{W}^I_\mathrm{reg}$ form a $\Z[v^{\pm 1}]$-basis of $\mathcal{N}_J \cdot 1_I$.
Furthermore, still for $x \in \prescript{J}{}{W}^I_\mathrm{reg}$, we have $\underline{H}_{x w_I} \in \mathcal{H} \cdot 1_I$ (as observed above) and $\underline{N}_{xw_I} = \psi( \underline{H}_{xw_I} ) \in \mathcal{N}_J \cdot 1_I$.
Thus, the elements $\underline{N}_{x w_I}$ with $x \in \prescript{J}{}{W}^I_\mathrm{reg}$ form another basis of $\mathcal{N}_J \cdot 1_I$, which we call the \emph{Kazhdan-Lusztig basis}.
Now we can define polynomials $p_{y,x}$ and $p^{y,x}$ for $x,y \in \prescript{J}{}{W}^I_\mathrm{reg}$ by the equalities
\[ \underline{N}_{xw_I} = \sum_{ y \in \prescript{J}{}{W}^I_\mathrm{reg} } p_{y,x} \cdot N_y \cdot 1_I \qquad \text{and} \qquad N_y \cdot 1_I = \sum_{ x \in \prescript{J}{}{W}^I_\mathrm{reg} } (-1)^{\ell(y)+\ell(x)} \cdot p^{y,x} \cdot  \underline{N}_{xw_I} . \]
We could call these polynomials \emph{double parabolic Kazhdan-Lusztig polynomials}, although as we will presently observe, they match the usual (anti-spherical or inverse spherical) parabolic Kazhdan-Lusztig polynomials for a suitable choice of parameters.
Indeed, we have
\[ \sum_{z \in \prescript{J}{}{W}} n^J_{z,xw_I} \cdot N_z = \underline{N}_{xw_I} = \sum_{ y \in \prescript{J}{}{W}^I_\mathrm{reg} } p_{y,x} \cdot N_y \cdot 1_I = \sum_{ y \in \prescript{J}{}{W}^I_\mathrm{reg} } \sum_{w \in W_I} v^{\ell(w_I) - \ell(w)} \cdot p_{y,x} \cdot N_{yw} \]
and therefore $p_{y,x} = n^J_{yw_I,xw_I}$, and by applying the homomorphism $\psi\colon \mathcal{H} \to \mathcal{N}_J$ to the equality
\[ H_y \cdot 1_I = \sum_{z \in W^I} (-1)^{\ell(y)+\ell(z)} \cdot m_I^{y^{-1},z^{-1}} \cdot \underline{H}_{z w_I} , \]
we obtain
\[ \sum_{ \substack{ z \in W^I , \\ zw_I \in \prescript{J}{}{W} } } (-1)^{\ell(y)+\ell(z)} \cdot m_I^{y^{-1},z^{-1}} \cdot \underline{N}_{z w_I} = N_y \cdot 1_I = \sum_{ x \in \prescript{J}{}{W}^I_\mathrm{reg} } (-1)^{\ell(y)+\ell(x)} \cdot p^{y,x} \cdot  \underline{N}_{xw_I} \]
and therefore $p^{y,x} = m_I^{y^{-1},x^{-1}}$.
As an immediate consequence of these observations, we get the following double parabolic inversion formula:

\begin{Theorem} \label{thm:inversionformuladoublecosets}
	Let $I,J \subseteq S$ such that $W_I$ is finite.
	For $x,y \in \prescript{J}{}{W}^I_\mathrm{reg}$, we have
	\[ \sum_{ z \in \prescript{J}{}{W}^I_\mathrm{reg} } (-1)^{\ell(y)+\ell(z)} \cdot n^J_{zw_I,xw_I} \cdot m_I^{ z^{-1} , y^{-1} } = \delta_{x,y} . \]
\end{Theorem}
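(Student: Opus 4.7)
The plan is to read the theorem off from the two change-of-basis formulas between the bases $\{N_y \cdot 1_I : y \in \prescript{J}{}{W}^I_\mathrm{reg}\}$ and $\{\underline{N}_{xw_I} : x \in \prescript{J}{}{W}^I_\mathrm{reg}\}$ of the $\Z[v^{\pm 1}]$-module $\mathcal{N}_J \cdot 1_I$, both of which are already established in the paragraph preceding the theorem statement. All the essential work — identifying the two bases, verifying the Kazhdan-Lusztig basis lies in $\mathcal{N}_J \cdot 1_I$, and computing the coefficients in terms of the usual parabolic Kazhdan-Lusztig polynomials — has been done, so the remaining task is purely formal.

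First, substitute the defining equality $\underline{N}_{zw_I} = \sum_{y \in \prescript{J}{}{W}^I_\mathrm{reg}} p_{y,z} \cdot N_y \cdot 1_I$ into the expansion $N_y \cdot 1_I = \sum_{z \in \prescript{J}{}{W}^I_\mathrm{reg}} (-1)^{\ell(y)+\ell(z)} \cdot p^{y,z} \cdot \underline{N}_{zw_I}$. Since $\{N_y \cdot 1_I\}_{y \in \prescript{J}{}{W}^I_\mathrm{reg}}$ is a basis, comparing coefficients yields the abstract inversion formula
\[ \sum_{z \in \prescript{J}{}{W}^I_\mathrm{reg}} (-1)^{\ell(y)+\ell(z)} \cdot p^{x,z} \cdot p_{y,z} = \delta_{x,y} \]
for all $x,y \in \prescript{J}{}{W}^I_\mathrm{reg}$ (or the symmetric version, depending on which substitution is performed first).

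Second, insert the two identifications already derived in the preceding discussion, namely $p_{y,z} = n^J_{yw_I,zw_I}$ and $p^{x,z} = m_I^{x^{-1},z^{-1}}$, and rename the summation index so that the formula matches the statement of the theorem exactly. Since these identifications are proved in the text just before the theorem via applying the homomorphism $\psi \colon \mathcal{H} \to \mathcal{N}_J$ to the expansion $H_y \cdot 1_I = \sum_{z \in W^I} (-1)^{\ell(y)+\ell(z)} m_I^{y^{-1},z^{-1}} \underline{H}_{zw_I}$ and comparing with the definitions of the anti-spherical Kazhdan-Lusztig polynomials, no further computation is required.

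There is essentially no obstacle here: the hard work lies in the careful setup of $\mathcal{N}_J \cdot 1_I$ and its two bases, and this has already been carried out in the preceding paragraphs. The only subtle point is keeping track of the two involutions (bar involution and the anti-involution $i$) and of the left/right actions, since $1_I$ intertwines a left action into the spherical module picture while $\psi$ handles the right action into the anti-spherical module picture — but this bookkeeping is already embedded in the identifications of $p_{y,x}$ and $p^{y,x}$, so the proof itself reduces to a one-line appeal to the invertibility of the change-of-basis matrix.
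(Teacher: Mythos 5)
Your approach is exactly the paper's: the theorem is read off from the mutual inverse change-of-basis matrices between $\{N_y\cdot 1_I\}$ and $\{\underline N_{xw_I}\}$ in $\mathcal N_J\cdot 1_I$, together with the identifications $p_{y,x}=n^J_{yw_I,xw_I}$ and $p^{y,x}=m_I^{y^{-1},x^{-1}}$ established just before the theorem. However, there is a small but real slip in the execution. The substitution you actually carry out (expanding $\underline N_{zw_I}$ inside $N_y\cdot 1_I$) yields
\[ \sum_{z} (-1)^{\ell(y)+\ell(z)}\, p^{y,z}\, p_{x,z} = \delta_{x,y}, \]
which after inserting the identifications becomes a sum over the \emph{second} indices of $n^J$ and $m_I$; it cannot be turned into the theorem's formula
\[ \sum_{z} (-1)^{\ell(y)+\ell(z)}\, n^J_{zw_I,xw_I}\, m_I^{z^{-1},y^{-1}} = \delta_{x,y} \]
(where $z$ sits in the \emph{first} positions) merely by ``renaming the summation index,'' as you claim. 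You need the other composition — substitute the expansion of $N_y\cdot 1_I$ into $\underline N_{xw_I}=\sum_y p_{y,x}\,N_y\cdot 1_I$ — which gives
\[ \sum_{z}(-1)^{\ell(z)+\ell(y)}\,p_{z,x}\,p^{z,y}=\delta_{x,y}, \]
and that is the one matching the theorem. You do parenthetically acknowledge that ``the symmetric version'' exists, so the idea is clearly in hand, but as written the proposal proves a companion identity rather than the stated one; swapping the order of substitution (rather than invoking a relabeling, or an argument that the two one-sided inverses agree, which is nontrivial over an infinite index set in the affine case) closes the gap cleanly and matches the paper.
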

\begin{proof}
	This follows from the definition of the polynomials $p_{z,x}$ and $p^{z,y}$ and the fact that
	\[ p_{z,x} = n^J_{zw_I,xw_I} \qquad \text{and} \qquad p^{z,y} = m_I^{ z^{-1} , y^{-1} } , \]
	as observed above.
\end{proof}

\bibliographystyle{alpha}
\bibliography{cohomology}

\end{document}